\newcommand{\Z}{\mathbf{Z}}
\newcommand{\Q}{\mathbf{Q}}
\newcommand{\R}{\mathbf{R}}
\newcommand{\C}{\mathbf{C}}
\newcommand{\PP}{\mathbf{P}}
\newcommand{\A}{\mathbf{A}}
\DeclareMathOperator{\Aut}{Aut}
\DeclareMathOperator{\GL}{GL}
\DeclareMathOperator{\cInd}{c-Ind}
\DeclareMathOperator{\id}{id}
\DeclareMathOperator{\ord}{ord}
\DeclareMathOperator{\SL}{SL}
\DeclareMathOperator{\tr}{tr}
\newtheorem{thm}{Theorem}[section]
\newtheorem{lem}[thm]{Lemma}
\newtheorem{pro}[thm]{Proposition}
\theoremstyle{definition}
\newtheorem{definition}[thm]{Definition}
\newtheorem{pb}[thm]{Problem}
\theoremstyle{remark}
\newtheorem{remark}[thm]{Remark}
\begin{document}

\title[Ramification at the cusps]{On the ramification of modular parametrizations at the cusps}

\author{François Brunault}
\address{François Brunault\\
ÉNS Lyon\\
Unité de mathématiques pures et appliquées\\
46 allée d'Italie\\
69007 Lyon\\
France}
\email{francois.brunault@ens-lyon.fr}
\urladdr{http://perso.ens-lyon.fr/francois.brunault/}

\subjclass[2010]{11F03, 11F30, 11F70}

\keywords{Elliptic curve, Modular parametrization, Ramification index, Automorphic representation, Local constant}


\begin{abstract}
We investigate the ramification of modular parame\-trizations of elliptic curves over $\Q$ at the cusps. We prove that if the modular form associated to the elliptic curve has minimal level among its twists by Dirichlet characters, then the modular parametrization is unramified at the cusps. The proof uses Bushnell's formula for the Godement-Jacquet local constant of a cuspidal automorphic representation of $\GL(2)$. We also report on numerical computations indicating that in general, the ramification index at a cusp seems to be a divisor of 24.\\

\noindent \textsc{Résumé}. Nous étudions la ramification aux pointes des paramé\-trisations modulaires des courbes elliptiques sur $\Q$. Nous montrons que si le forme modulaire associée à la courbe elliptique est de niveau minimal parmi ses tordues par les caractères de Dirichlet, alors la paramétrisation modulaire est non ramifiée aux pointes. La preuve utilise la formule de Bushnell pour la constante locale de Godement-Jacquet d'une représentation automorphe supercuspidale de $\GL(2)$. Nous présentons également des calculs numériques indiquant qu'en général, l'indice de ramification en une pointe semble être un diviseur de 24.
\end{abstract}

\maketitle

\section{Introduction}

Let $E/\Q$ be an elliptic curve of conductor $N$. It is known \cite{BCDT} that $E$ admits a \emph{modular parametrization}, in other words a non-constant morphism $\varphi : X_0(N) \to E$ defined over $\Q$. By the Riemann-Hurwitz formula, the morphism $\varphi$ necessarily ramifies as soon as the genus of $X_0(N)$ is at least $2$, and we may ask whether its ramification points have interesting properties. In this direction, Mazur and Swinnerton-Dyer discovered a link between the analytic rank of $E$ and the number of ramification points of $\varphi$ on the imaginary axis \cite{mazur-swinnerton-dyer:LpE}. Further results and numerical examples were obtained by Delaunay \cite{delaunay}.

In this article, we consider the following problem.

\begin{pb}\label{pb} Compute the ramification index $e_{\varphi}(x)$ of $\varphi$ at a given cusp $x \in X_0(N)$.
\end{pb}

Let $f_E$ be the newform of weight $2$ on $\Gamma_0(N)$ associated to $E$. The pull-back $\varphi^* \omega_E$ of a Néron differential $\omega_E$ on $E$ under $\varphi$ is a nonzero multiple of $\omega_{f_E} = 2\pi i f_E(z) dz$. It follows that the ramification index of $\varphi$ at a given point $x \in X_0(N)$ is given by $e_{\varphi}(x) = 1+\ord_{x} \omega_{f_E}$. We prove the following result.

\begin{thm}\label{main result}
Let $E/\Q$ be an elliptic curve such that the newform $f_E$ has minimal level among its twists by Dirichlet characters. Then $\omega_{f_E}$ doesn't vanish at the cusps of $X_0(N)$. In particular, the modular parametrization of $E$ is unramified at the cusps of $X_0(N)$.
\end{thm}

\begin{remark}\label{rk Gamma0N}
A newform having minimal level among its twists by Dirichlet characters is said to be \emph{minimal by twist}. It is not true in general that if a newform $f$ of weight $2$ on $\Gamma_0(N)$ is minimal by twist, then $\omega_f$ doesn't vanish at the cusps of $\Gamma_0(N)$. For example, there is a newform $f$ of weight 2 on $\Gamma_0(625)$ such that $f$ is minimal by twist and $\omega_f$ vanishes at the cusp $1/25$ (see Remark \ref{rk 625}).
\end{remark}

If $N$ is squarefree, then all newforms of level $N$ are minimal by twist, and in this particular case, Theorem \ref{main result} follows easily by considering the action of Atkin-Lehner involutions. Thus modular parametrizations of semistable elliptic curves are always unramified at the cusps.

For general $N$, determining the ramification index becomes more intricate and we proceed as follows. In \S\ref{merel formula} we apply a formula of Merel which expresses the translate of a newform $f$ as a linear combination of twists of $f$ by Dirichlet characters. This enables us in \S\ref{reduction local} to reduce Theorem \ref{main result} to a purely local non-vanishing statement. We prove this non-vanishing in \S\ref{sec unramified}-\ref{sec ramified} using Bushnell's formula for the local constant of a cuspidal automorphic representation of $\GL(2)$, together with results of Loeffler and Weinstein on the cuspidal inducing data underlying such representations.

The following side result may be of independent interest. Given a newform $f$ which is minimal by twist, we obtain a rather explicit expression, depending only on the local components of $f$, for the Fourier expansion of $f$ at an arbitrary cusp (see Remark \ref{remark bdn}).

Theorem \ref{main result} was suggested by numerical computations, which we report in \S\ref{numerical}. Using Pari/GP \cite{pari250}, we estimated numerically the ramification indices at the cusps, for all elliptic curves of conductor $\leq 2000$. This provided us with a list of $745$ elliptic curves (up to isogeny) whose modular parametrization seemed to have at least one ramification point among the cusps. Using Magma \cite{magma}, we then checked that none of the corresponding modular forms was minimal by twist. In our examples, the ramification index always appears to be a divisor of 24. It seems interesting to find a general formula for this number in terms of $f$.

After I finished this work, I learned of the recent article \cite{nelson-pitale-saha}, which provides a nice exact formula for an averaged version of the $n$-th Fourier coefficient with respect to cusps of a given level \cite[Prop 3.12]{nelson-pitale-saha}. This can be used to give an explicit formula for the ramification index of a modular parametrization at a cusp (\cite[Remark 3.15]{nelson-pitale-saha},\cite{saha:unpublished}).

In the first version of this work, there was a mistake in the argument of Section \ref{sec unramified}. I would like to thank the anonymous referee for pointing out this mistake, which led me to find the example alluded to in Remark \ref{rk Gamma0N}. I would also like to thank Christophe Delaunay for helpful comments and Hao Chen for confirming numerically the example of Remark \ref{rk 625}. Finally, I would like to thank the anonymous referee for valuable improvement suggestions on this work.

\section{First properties of the ramification index}

In this section, we establish basic properties of the ramification index.

\begin{definition}
Let $f$ be a newform of weight $2$ on $\Gamma_0(N)$, and let $\omega_f = 2\pi if(z)dz$ be the $1$-form associated to $f$. For any point $x \in X_0(N)(\C)$, we define the ramification index of $f$ at $x$ by $e_f(x)=1+\ord_{x} (\omega_f)$.
\end{definition}

\begin{lem}\label{lem WQ}
Let $Q$ be a divisor of $N$ such that $(Q,\frac{N}{Q})=1$, and let $W_Q$ be the corresponding Atkin-Lehner involution on $X_0(N)$. For every $x \in X_0(N)(\C)$, we have $e_f(W_Q (x))=e_f(x)$.
\end{lem}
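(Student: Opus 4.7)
The plan is to exploit the fact that newforms are eigenforms for the Atkin-Lehner involutions, so that the associated differential form $\omega_f$ is preserved up to sign by $W_Q$, which immediately forces the order of vanishing to be invariant under $W_Q$.

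More precisely, I would represent $W_Q$ by a matrix $w_Q \in \GL_2^+(\Q)$ of determinant $Q$ of the usual shape $\begin{pmatrix} Qa & b \\ Nc & Qd \end{pmatrix}$ with $Qad-(N/Q)bc=1$. Because $f$ is a newform and $(Q,N/Q)=1$, the standard theory of Atkin-Lehner operators gives $f|_2 w_Q = \epsilon_Q f$ for some sign $\epsilon_Q \in \{\pm 1\}$. Translating this into the language of differentials: writing $w_Q = \begin{pmatrix} \alpha & \beta \\ \gamma & \delta \end{pmatrix}$, one has $d(w_Q z) = \frac{\det(w_Q)}{(\gamma z + \delta)^2}\, dz$, so $W_Q^* \omega_f = 2\pi i \,(f|_2 w_Q)(z)\, dz = \epsilon_Q\, \omega_f$.

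Since $W_Q$ is an automorphism of $X_0(N)$, pullback of a uniformizer at $W_Q(x)$ is a uniformizer at $x$, so for any point $x$ and any meromorphic differential $\omega$,
\[
\ord_x(W_Q^* \omega) = \ord_{W_Q(x)}(\omega).
\]
Applying this to $\omega_f$ and using $W_Q^* \omega_f = \pm \omega_f$ yields $\ord_{W_Q(x)}(\omega_f) = \ord_x(\omega_f)$, hence $e_f(W_Q(x)) = e_f(x)$.

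There is no real obstacle here: the only non-trivial input is the classical fact that a newform on $\Gamma_0(N)$ is an eigenvector of $W_Q$ with eigenvalue $\pm 1$ whenever $(Q,N/Q)=1$. Everything else is a formal manipulation of the weight-$2$ slash action and the compatibility of orders of vanishing with automorphic pullback. The point of the lemma for the sequel is presumably that one may freely move a cusp by Atkin-Lehner involutions when computing $e_f$, thereby reducing Problem~\ref{pb} to a set of orbit representatives among the cusps.
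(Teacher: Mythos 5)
Your proof is correct and follows exactly the same route as the paper: the paper's one-line argument is $\ord_{W_Q(x)}(\omega_f) = \ord_x(W_Q^*\omega_f) = \ord_x(\omega_f)$, using that $f$ is an eigenvector of $W_Q$. You have simply spelled out the two ingredients (the eigenvalue $\pm 1$ and the compatibility of $\ord$ with pullback by an automorphism) in more detail.
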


\begin{proof}
We have $\ord_{W_Q (x)} (\omega_f) = \ord_x (W_Q^* \omega_f) = \ord_x (\omega_f)$ since $f$ is an eigenvector of $W_Q$.
\end{proof}

\begin{lem}\label{lem AutC}
Let $\sigma \in \Aut(\C)$ and let $f^\sigma \in S_2(\Gamma_0(N))$ be the newform obtained by applying $\sigma$ to the coefficients of $f$. For every $x \in X_0(N)(\C)$, we have $e_f(x)=e_{f^\sigma}(\sigma(x))$.
\end{lem}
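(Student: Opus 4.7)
The strategy is to exploit the algebraic nature of $\omega_f$. Since $X_0(N)$ has a canonical model over $\Q$, the group $\Aut(\C)$ acts on $X_0(N)(\C)$, and by Shimura's theorem this action is compatible with the Galois action on Fourier coefficients via the $q$-expansion principle. Concretely, the plan is to fix a $\Q$-basis $\omega_1,\dots,\omega_g$ of $H^0(X_0(N)_\Q,\Omega^1)$ and to use the fact that the isomorphism $f \mapsto \omega_f$ from $S_2(\Gamma_0(N))$ to $H^0(X_0(N)_\C,\Omega^1)$ identifies cusp forms with rational Fourier coefficients with the $\Q$-span of the $\omega_i$. Writing $\omega_f = \sum_i c_i \omega_i$ with $c_i \in \C$, one then has $\omega_{f^\sigma} = \sum_i \sigma(c_i) \omega_i$.

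It then suffices to prove the purely algebraic claim: for any $\omega = \sum_i c_i \omega_i$ with $c_i \in \C$, any $x \in X_0(N)(\C)$, and any $\sigma \in \Aut(\C)$, one has $\ord_x(\omega) = \ord_{\sigma(x)}(\omega^\sigma)$, where $\omega^\sigma := \sum_i \sigma(c_i) \omega_i$. When $x$ is transcendental over $\Q$ the identity is trivial, both sides being zero. When $x$ is $\overline{\Q}$-rational, one picks a uniformizer $t \in \overline{\Q}(X_0(N))$ at $x$ and expands $\omega_i = \sum_{n} a_{i,n}\, t^n\, dt$ with $a_{i,n} \in \overline{\Q}$. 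Then $\ord_x(\omega)$ is the smallest $n$ with $\sum_i c_i a_{i,n} \neq 0$. Since $t^\sigma$ is a uniformizer at $\sigma(x)$ and the expansion of $\omega^\sigma$ at $\sigma(x)$ in $t^\sigma$ has $n$-th coefficient $\sigma\bigl(\sum_i c_i a_{i,n}\bigr)$, injectivity of $\sigma$ gives the same order of non-vanishing.

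Combining the two points yields $\ord_x(\omega_f) = \ord_{\sigma(x)}(\omega_{f^\sigma})$, whence $e_f(x) = e_{f^\sigma}(\sigma(x))$. The only mildly subtle ingredient is the Galois-equivariance of $f \mapsto \omega_f$ in the first step, which rests on Shimura's rationality of modular forms at the rational cusp $\infty$; the remaining verification is essentially formal manipulation with local uniformizers.
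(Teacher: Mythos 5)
Your argument is correct and is essentially an unpacking of the paper's one-line proof, which invokes $\sigma_* \omega_f = \omega_{f^\sigma}$ (your first step, via the $q$-expansion principle at the rational cusp $\infty$) together with the $\Aut(\C)$-equivariance of $\ord_x$ (your second step, checked on formal expansions in an algebraic uniformizer). One small imprecision: for arbitrary $c_i \in \C$ the divisor of $\sum_i c_i \omega_i$ may well be supported at transcendental points, so "both sides being zero" is not automatic in that generality; it is harmless here because a newform has algebraic Fourier coefficients, so $\omega_f$ is defined over $\overline{\Q}$ and its zeros are $\overline{\Q}$-points (and the cusps, the only points used later, are algebraic in any case).
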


\begin{proof}
This follows as in Lemma \ref{lem WQ} from $\sigma_* \omega_f = \omega_{f^\sigma}$.
\end{proof}

Recall that the set of cusps of $X_0(N)(\C)$ is $\Gamma_0(N) \backslash \PP^1(\Q)$.

\begin{definition}
The \emph{level} of a cusp $x$ of $X_0(N)$ is defined to be $(b,N)$, where $\frac{a}{b} \in \PP^1(\Q)$ is any representative of $x$ such that $(a,b,N)=1$.
\end{definition}

\begin{lem}\label{lem AutC cusps}
For any divisor $d$ of $N$, the group $\Aut(\C)$ acts transitively on the set of cusps of level $d$ of $X_0(N)$.\end{lem}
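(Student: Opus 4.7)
The plan is to make the action of $\Aut(\C)$ on cusps of $X_0(N)$ explicit and then check transitivity on each level separately. Since all cusps of $X_0(N)$ are defined over $\Q(\zeta_N)$, the action of $\Aut(\C)$ factors through $\Gal(\Q(\zeta_N)/\Q) \cong (\Z/N\Z)^*$, so it suffices to study this finite action.

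First, I would fix a convenient parametrization of the cusps of level $d$. Set $M = \gcd(d, N/d)$. Every cusp of level $d$ admits a representative $a/d \in \PP^1(\Q)$ with $(a,d)=1$, and a direct matrix computation in $\Gamma_0(N)$ shows that two such cusps $[a_1/d]$ and $[a_2/d]$ coincide in $X_0(N)$ precisely when $a_1 \equiv a_2 \pmod M$. This yields a bijection between the cusps of level $d$ and the group $(\Z/M\Z)^*$, recovering the classical count $\phi(M)$ cusps of level $d$.

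Second, I would establish the Galois-action formula: for $t \in (\Z/N\Z)^*$ with associated automorphism $\sigma_t : \zeta_N \mapsto \zeta_N^t$, the induced permutation of the cusps of level $d$ is, in the parametrization above, given by multiplication by $t$ (or $t^{-1}$, depending on conventions) modulo $M$. This is the main technical step. A natural route is the moduli interpretation: a cusp of level $d$ corresponds to a N\'eron $(N/d)$-gon equipped with a cyclic subgroup of order $N$, and the Galois action on the set of such subgroups is controlled by the action on the $M$-th roots of unity occurring in the Tate curve. Alternatively, one can lift the problem to $X_1(N)$, for which the Galois action on cusps is classical, and descend via the Galois covering $X_1(N) \to X_0(N)$ of group $(\Z/N\Z)^*/\{\pm 1\}$.

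Granted the formula, the lemma is immediate. The natural surjection $(\Z/N\Z)^* \twoheadrightarrow (\Z/M\Z)^*$ shows that the image of $\Aut(\C)$ in the permutations of the cusps of level $d$ contains the translation action of $(\Z/M\Z)^*$ on itself, which is transitive. The difficulty is therefore concentrated in the explicit Galois formula; the combinatorial parametrization and the final deduction are routine.
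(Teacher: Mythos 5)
Your outline is correct, and it is essentially the argument underlying the paper's one-line proof: the paper simply cites Stevens' Theorem 1.3.1, which is exactly the explicit Galois-action formula you identify as the main technical step. Your combinatorial bookkeeping is right — the cusps of level $d$ are parametrized by $a \bmod M$ with $M=(d,N/d)$, the action of $\Gal(\Q(\zeta_N)/\Q)\cong(\Z/N\Z)^\times$ is by multiplication on this parameter, and transitivity then follows from the surjectivity of $(\Z/N\Z)^\times\to(\Z/M\Z)^\times$. The only caveat is that you leave the Galois formula itself as a sketch; either of the routes you propose (the Tate-curve/N\'eron-polygon moduli description, or descent from $X_1(N)$, where the action on cusps is classical) does work and is the standard way to prove the cited theorem, so there is no gap in the strategy, just work left to carry out that the paper outsources to the reference.
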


\begin{proof}
This is a consequence of \cite[Thm 1.3.1]{stevens:arithmetic}.
\end{proof}

The action of $W_Q$ on the cusps can be described as follows.

\begin{lem}\label{lem WQ cusps}
Let $N=QQ'$ with $(Q,Q')=1$. Let $d$ be a divisor of $N$. Write $d=d_Q d_{Q'}$ with $d_Q | Q$ and $d_{Q'} | Q'$. Then $W_Q$ maps cusps of level $d$ to cusps of level $\frac{Q}{d_Q} \cdot d_{Q'}$.
\end{lem}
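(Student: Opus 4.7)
My approach is to compute $W_Q(x)$ by hand using an explicit matrix representative of the Atkin--Lehner involution, and then analyse the resulting fraction prime by prime. Fix an integer matrix $\widetilde{W}_Q = \begin{pmatrix} Q\alpha & \beta \\ N\gamma & Q\delta \end{pmatrix}$ of determinant $Q$, so that $Q\alpha\delta - Q'\beta\gamma = 1$. This single identity supplies the arithmetic input I will use throughout: $\gcd(\beta, Q) = \gcd(\gamma, Q) = 1$ and $\gcd(\alpha, Q') = \gcd(\delta, Q') = 1$. Write a cusp $x$ of level $d$ as $a/b$ with $(a,b)=1$ and $(b,N)=d$; then $\widetilde{W}_Q(x) = r'/s'$ where $r' = Q\alpha a + \beta b$ and $s' = N\gamma a + Q\delta b$, and the level of $W_Q(x)$ is read off, one prime at a time, from the $p$-adic valuations of $r'$, $s'$ and $N$, after dividing out $\gcd(r', s')$.

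Since $(Q, Q') = 1$, every prime $p \mid N$ divides exactly one of $Q$ or $Q'$, and the analysis splits into two symmetric cases. For $p \mid Q$, set $e = v_p(N) = v_p(Q)$ and $i = v_p(d_Q) \in \{0, 1, \dots, e\}$. Because $\beta, \gamma$ are $p$-adic units and, whenever $i > 0$, the condition $(a,b) = 1$ forces $v_p(a) = 0$, a direct two-term valuation estimate handles each of the subcases $i=0$, $0 < i < e$, $i = e$ and yields that the $p$-contribution to the level of $W_Q(x)$ is $e - i = v_p(Q/d_Q)$. The symmetric case $p \mid Q'$, using the $p$-adic units $\alpha$ and $\delta$, gives a $p$-contribution equal to $v_p(d_{Q'})$. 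Multiplying the local contributions over all primes dividing $N$ produces the global level $(Q/d_Q) \cdot d_{Q'}$.

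The one point that needs care is the sub-case $p \mid Q$ with $i = e$, where both $v_p(r')$ and $v_p(s')$ turn out to be at least $e$, and a naive estimate leaves the level undetermined. Here one observes that $v_p(N\gamma a) = e$ exactly (since $\gamma$ is a $p$-adic unit and $v_p(a) = 0$), whereas $v_p(Q\delta b) \ge 2e$, so $v_p(s') = e$ precisely; hence $\gcd(r', s')$ has $v_p$ equal to $e$, and after the cancellation of $p^e$ the reduced denominator is a $p$-adic unit, giving the correct $p$-contribution $0 = v_p(Q/d_Q)$. The remaining subcases are routine valuation bookkeeping, so once this case is handled the lemma follows.
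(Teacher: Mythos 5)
Your argument is correct: the determinant identity $Q\alpha\delta-Q'\beta\gamma=1$ does give the unit conditions you need, the valuation bookkeeping in each subcase checks out (in particular the delicate subcase $p\mid Q$, $v_p(d_Q)=v_p(Q)$, where $v_p(N\gamma a)=e$ exactly while $v_p(Q\delta b)\ge 2e$ forces $v_p(s')=e$), and assembling the local contributions over $p\mid N$ gives the asserted level $\tfrac{Q}{d_Q}\cdot d_{Q'}$. Your route is, however, genuinely different from the paper's. The paper does not treat a general representative $a/b$ at all: it first observes that $W_Q$ is defined over $\Q$ and invokes the transitivity of $\Aut(\C)$ on the cusps of a fixed level (Lemma \ref{lem AutC cusps}, which rests on a theorem of Stevens), so that it suffices to compute the level of the single cusp $W_Q(\frac1d)$. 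For that one representative, with the matrix $\begin{pmatrix} Qu & v \\ N & Q\end{pmatrix}$, the fraction $\frac{Qu+dv}{N+dQ}$ can be simplified globally by dividing out $d_Q$, and a short gcd computation shows $(b,N)=\frac{Q}{d_Q}d_{Q'}$ with $(a,b,N)=1$ --- no prime-by-prime case analysis is needed. Your proof trades that external input (rationality of $W_Q$ plus Stevens) for a longer but entirely self-contained local computation valid for every representative; the paper's proof is shorter but leans on the surrounding lemmas. Both are legitimate, and yours has the minor virtue of not requiring the cusp-level transitivity statement at all.
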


\begin{proof}
Since $W_Q$ is defined over $\Q$, it suffices to compute the level of the cusp $W_Q(\frac{1}{d})$. Let $u,v$ be two integers such that $Qu-Q'v=1$. Then $W_Q(\frac{1}{d})=\begin{pmatrix} Qu & v \\ N & Q \end{pmatrix}(\frac{1}{d}) = \frac{Qu+dv}{N+dQ} = \frac{a}{b}$ with $a=\frac{Q}{d_Q} u+d_{Q'} v$ and $b=\frac{N}{d_Q}+d_{Q'} Q$. We have $(b,Q)=\frac{Q}{d_Q}$ and $(b,Q')=d_{Q'}$ so that $(b,N)=\frac{Q}{d_Q} \cdot d_{Q'}$. Since $(a,\frac{Q}{d_Q})=(a,d_{Q'})=1$, it follows that $(a,b,N)=1$, whence the result.
\end{proof}

Let $d$ be a divisor of $N$. By Lemma \ref{lem WQ cusps}, there exists $Q$ such that $W_Q$ maps cusps of level $d$ to cusps of level $\delta=(d,\frac{N}{d})$. Note that $\delta^2 |N$. In view of the previous lemmas, Theorem \ref{main result} is reduced to showing that if $f$ is minimal by twist, then $e_f(\frac{1}{d})=1$ for every $d$ such that $d^2 | N$.

We now make use of the following idea : studying the behaviour of $f$ at $\frac{1}{d}$ amounts to studying the behaviour of $f(z+\frac{1}{d}) | W_N$ at infinity. More precisely, define $f_d(z) = f(z+\frac{1}{d})$. A direct computation shows that if $d^2 | N$ then $f_d \in S_2(\Gamma_1(N))$.

From now on, we fix an integer $d \geq 1$ such that $d^2 | N$ and we define $g_d = W_N(f_d) = \sum_{n \geq 1} b_{d,n} q^n \in S_2(\Gamma_1(N))$.

\begin{pro}\label{ord gd}
We have $e_f(\frac{1}{d})=\min \{n \geq 1 : b_{d,n} \neq 0\}$.
\end{pro}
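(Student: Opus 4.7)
The plan is to compute $e_f(\frac{1}{d})$ by pulling the cusp $\frac{1}{d}$ back to $\infty$ via a matrix in $\SL_2(\Z)$, and then matching the resulting Fourier expansion with that of $g_d$ up to an invertible factor.

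First I would set $\sigma = \begin{pmatrix} 1 & 0 \\ d & 1 \end{pmatrix} \in \SL_2(\Z)$, so that $\sigma(\infty) = \frac{1}{d}$, and check that the stabilizer of $\frac{1}{d}$ in $\Gamma_0(N)$ is generated (modulo $\pm I$) by $\sigma \begin{pmatrix} 1 & h \\ 0 & 1 \end{pmatrix} \sigma^{-1}$ with $h = N/d^2$ --- this is where the hypothesis $d^2 \mid N$ enters. Hence $(f|_2 \sigma)(z) = \sum_{n \geq 1} c_n q_h^n$ with $q_h = e^{2\pi i z / h}$, and pulling back the differential form $\omega_f$ through $\sigma$, using $dq_h = (2\pi i / h)\, q_h\, dz$, yields
\[
e_f(\tfrac{1}{d}) \;=\; 1 + \ord_{1/d} \omega_f \;=\; \min \{n \geq 1 : c_n \neq 0\}.
\]

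Next I would exploit the matrix factorization
\[
\begin{pmatrix} 1 & 1/d \\ 0 & 1 \end{pmatrix}\begin{pmatrix} 0 & -1 \\ N & 0 \end{pmatrix} \;=\; \sigma \cdot \begin{pmatrix} N/d & -1 \\ 0 & d \end{pmatrix},
\]
whose right-hand upper-triangular factor acts on the upper half plane by the affine map $z \mapsto hz - \frac{1}{d}$. Unwinding the definitions $f_d(z) = f(z + \frac{1}{d})$ and $g_d = W_N(f_d)$, this identity gives $g_d(z) = \kappa \cdot (f|_2 \sigma)(hz - \frac{1}{d})$ for an explicit nonzero constant $\kappa$. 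Substituting the Fourier expansion of $f|_2 \sigma$ and reading off coefficients of $q^n = e^{2\pi i n z}$ produces $b_{d,n} = \kappa\, e^{-2\pi i n d / N}\, c_n$. In particular $b_{d,n}$ and $c_n$ vanish simultaneously, and the proposition follows.

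The only real obstacle is careful bookkeeping: handling the weight-$2$ slash operator for the non-$\SL_2$ matrix $W_N$ (of determinant $N$), the cusp width $h = N/d^2$, and the phase factor introduced when pulling $q_h$ through the affine change of variable $z \mapsto hz - \frac{1}{d}$. No deeper input is required beyond $d^2 \mid N$, which ensures both that $h$ is a positive integer and that the conjugation computation identifying the width of the cusp $\frac{1}{d}$ goes through cleanly.
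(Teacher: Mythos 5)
Your proof is correct and is essentially the paper's argument: the paper takes $M=\frac{1}{\sqrt N}\left(\begin{smallmatrix}1&1/d\\0&1\end{smallmatrix}\right)\left(\begin{smallmatrix}0&-1\\N&0\end{smallmatrix}\right)$ directly as a scaling matrix for the cusp $\frac1d$, observes $f|M=g_d$ and that the width of $\frac1d$ relative to $M$ is $1$, and concludes $\ord_{1/d}\omega_f=\ord_\infty\omega_{g_d}$; your factorization $M=\sigma\cdot\frac{1}{\sqrt N}\left(\begin{smallmatrix}N/d&-1\\0&d\end{smallmatrix}\right)$ is exactly what makes the paper's width-one claim work. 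The only difference is presentational: you route the computation through the integral scaling matrix $\sigma$ and the width $h=N/d^2$, then transport coefficients via the affine factor, whereas the paper absorbs that factor into $M$ from the start.
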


\begin{proof}
Let $M=\frac{1}{\sqrt{N}} \begin{pmatrix} 1 & \frac{1}{d}\\ 0 & 1 \end{pmatrix} \begin{pmatrix} 0 & -1 \\ N & 0 \end{pmatrix} \in \SL_2(\R)$. We have $M(\infty)=\frac{1}{d}$ and $f|M=g_d$. Since $M^{-1} \Gamma_0(N) M \cap \begin{pmatrix} 1 & \R \\ 0 & 1 \end{pmatrix} = \begin{pmatrix} 1 & \Z \\ 0 & 1 \end{pmatrix}$, a uniformizing parameter at $[\frac{1}{d}] \in X_0(N)(\C)$ is given by $z \mapsto \exp(2\pi i M^{-1} z)$. It follows that $\ord_{\frac{1}{d}} \omega_f = \ord_{\infty} \omega_{g_d}$.
\end{proof}

Note that $e_f(1)=e_f(\infty)=1$. The case $d=2$ is also easily treated.

\begin{pro}
If $4|N$ then $e_f(\frac12)=1$.
\end{pro}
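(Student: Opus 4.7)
The plan is to show that, under the hypothesis $4\mid N$, one has $f_2=-f$; Proposition~\ref{ord gd} then immediately yields $e_f(\tfrac12)=1$.

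Writing $f=\sum_{n\ge 1} a_n q^n$ with $q=e^{2\pi i z}$, expanding $f_2(z)=f(z+\tfrac12)$ gives
\[
f_2(z)=\sum_{n\ge 1} (-1)^n a_n q^n.
\]
The only input I need is the standard vanishing of Fourier coefficients at a prime whose square divides the level: since $f$ is a newform on $\Gamma_0(N)$ with $2^2\mid N$ and trivial nebentypus, the local component at $2$ of the automorphic representation attached to $f$ is ramified, so the Euler factor at $2$ is trivial. Equivalently $a_{2^r}=0$ for all $r\ge 1$, and multiplicativity of the Hecke eigenvalues then forces $a_n=0$ for every even $n$.

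Substituting this vanishing into the expansion above, only the odd-index terms survive and each picks up a sign $-1$, so $f_2=-f$. Applying $W_N$ and using $W_N f=\epsilon f$ with $\epsilon\in\{\pm 1\}$ yields $g_2=-\epsilon f$, whence $b_{2,1}=-\epsilon\neq 0$ and Proposition~\ref{ord gd} gives $e_f(\tfrac12)=1$.

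No step is a serious obstacle: the whole argument collapses to the vanishing $a_2=0$ when $4\mid N$. The contrast with cusps of level $d\ge 3$, where no such simple cancellation occurs, is precisely what forces the local-representation-theoretic reformulation and the use of Bushnell's formula in the main theorem.
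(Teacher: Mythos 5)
Your proposal is correct and follows essentially the same route as the paper: both rest on the observation that $a_n=0$ for all even $n$ when $4\mid N$, hence $f(z+\tfrac12)=-f(z)$. The paper concludes directly via $e_f(\tfrac12)=e_f(0)=1$, whereas you route the last step through $W_N$ and Proposition~\ref{ord gd}; this is only a cosmetic difference.
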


\begin{proof}
Since the Fourier expansion of $f$ involves only odd powers of $q$, we have $f(z+\frac12)=-f(z)$, so that $e_f(\frac12)=e_f(0)=1$.
\end{proof}

In Sections \ref{merel formula} and \ref{reduction local}, we reduce Theorem \ref{main result} to a purely local question on irreducible cuspidal representations of $\GL_2(\Q_p)$. We use a formula of Merel to express the $n$-th Fourier coefficient of $f$ at a given cusp of $X_0(N)$ as a certain sum of pseudo-eigenvalues of Atkin-Lehner involutions associated to twists of $f$. We then express these pseudo-eigenvalues as products of local epsilon factors. Another way of reducing Theorem \ref{main result} to a local statement would have been to express the Fourier coefficients of $f$ at a given cusp in terms of the Whittaker newform associated to $f$ as in \cite[Section 3.4.2]{nelson-pitale-saha}, and to use the formula for the Whittaker newform in \cite[Proposition 2.30]{saha:GL2}.

\section{Merel's formula}\label{merel formula}

In this section, we apply a formula of Merel \cite{merel:symbmanin} expressing the additive translate of a newform as a linear combination of certain twists of this newform. The related problem of computing the Fourier expansion of a newform at an arbitrary cusp has also been studied by Delaunay in his PhD thesis \cite[III.2]{delaunay:thesis}. Although Delaunay's results apply in the particular case considered here, we prefer to use Merel's formula since it does not assume that the newform is minimal by twist.

Let us first recall the notations of \cite{merel:symbmanin}. Let $\phi$ denote Euler's function. For any integer $m \geq 1$, let $\Sigma_m$ be the set of prime factors of $m$. For any Dirichlet character $\chi : (\Z/m\Z)^\times \to \C^\times$, the Gauss sum of $\chi$ is $\tau(\chi)=\sum_{a \in (\Z/m\Z)^\times} \chi(a) e^{2\pi i a/m}$, and the conductor of $\chi$ is denoted by $m_\chi$. For any newform $F$ of weight $k \geq 2$ on $\Gamma_1(M)$ and for any prime $p$, let $L_p(F,X)=1-a_p(F) X + a_{p,p}(F) p X^2 \in \C[X]$ be the inverse of the Euler factor of $F$ at $p$. If $T^+$ and $T^-$ are finite sets of prime numbers, we define
\begin{equation*}
F^{[T^+,T^-]} = F |_k \prod_{p \in T^+} L_p(F,p^{-k/2}\begin{pmatrix} p & 0 \\ 0 & 1 \end{pmatrix}) \prod_{p \in T^-} L_p(\bar F,p^{-k/2} \begin{pmatrix} 1 & 0 \\ 0 & p \end{pmatrix}).
\end{equation*}
There exists a unique newform $F \otimes \chi$ of weight $k$ and level dividing $\operatorname{lcm}(M,m^2)$ such that $a_p(F \otimes \chi)=a_p(F) \chi(p)$ for any prime $p \not\in \Sigma_{Mm}$.

Using \cite[(5)]{merel:symbmanin} with $\frac{n}{N}=\frac{1}{d}$, we get
\begin{equation}\label{fd}
f_d = \sum_{\chi}{\frac{\tau(\bar\chi)}{\phi(d)}}
({f\otimes{\chi}})^{[\Sigma_{d},\Sigma_{d}-\Sigma_{m_{\chi}}]}|\prod_{p \in \Sigma_{d/m_\chi}} P_{p}(\begin{pmatrix} p&0 \\ 0&1\end{pmatrix})
\end{equation}
where $\chi$ runs through the primitive Dirichlet characters of conductor $m_\chi$ dividing $d$, and the polynomial $P_p(X) \in \C[X]$ is given by
\begin{equation*}
P_p(X) = \begin{cases} -\bar\chi(p) & \textrm{if } a_p(f)=0, v_p(d)=1, v_p(m_\chi)=0,\\
(a_p(f)X)^{v_p(d/m_\chi)} & \textrm{otherwise}.
 \end{cases}
\end{equation*}

Since $a_p(f)=0$ for $p \in \Sigma_d$, the product over $p$ in (\ref{fd}) vanishes unless $(m_\chi,\frac{d}{m_\chi})=1$ and $\frac{d}{m_\chi}$ is squarefree. Let $S'(d)$ be the set of primitive Dirichlet characters $\chi$ such that $m_\chi | d$, $(m_\chi,\frac{d}{m_\chi})=1$ and $\frac{d}{m_\chi}$ is squarefree. Taking into account $L_p(f \otimes \chi,X)=1$ for $p \in \Sigma_{d/m_\chi}$, we get
\begin{equation}\label{fd2}
f_d = \sum_{\chi \in S'(d)} \frac{\tau(\bar\chi)}{\phi(d)} \Bigl(\prod_{p \in \Sigma_{d/m_\chi}} -\bar\chi(p)\Bigr)
 \cdot ({f\otimes{\chi}})^{[\Sigma_{m_{\chi}},\emptyset]}.
\end{equation}

\emph{From now on, we assume that $f$ is minimal by twist.} Then $f \otimes \chi$ has level exactly $N$ for every character $\chi$ of conductor dividing $d$, so that $({f\otimes{\chi}})^{[\Sigma_{m_{\chi}},\emptyset]} = f \otimes \chi$ for every $\chi \in S'(d)$.

Let $S(d)$ be the set of Dirichlet characters modulo $d$ induced by the elements of $S'(d)$. If $\chi' \in S'(d)$ induces $\chi \in S(d)$, then
\begin{equation*}
\tau(\chi)=\tau(\chi') \cdot \prod_{p \in \Sigma_{d/m_\chi}} -\chi'(p).
\end{equation*}
Thus $f_d$ can finally be rewritten
\begin{equation}\label{fd3}
f_d = \sum_{\chi \in S(d)} \frac{\tau(\bar\chi)}{\phi(d)} \cdot {f\otimes{\chi}}.
\end{equation}

We now apply $W_N$. We have $W_N(f \otimes \chi) = w(f \otimes \chi) \cdot f \otimes \bar\chi$, where $w(f \otimes \chi)$ is the pseudo-eigenvalue of $W_N$ at $f \otimes \chi$. It follows that
\begin{equation}\label{gd}
g_d = \sum_{\chi \in S(d)} \frac{\tau(\bar\chi)}{\phi(d)} \cdot w(f \otimes \chi) \cdot f \otimes \bar\chi.
\end{equation}
In particular, we get
\begin{equation}\label{bdn}
b_{d,n} =  \frac{a_n(f)}{\phi(d)} \sum_{\chi \in S(d)}\tau(\bar\chi) \cdot \bar\chi(n) \cdot w(f \otimes \chi) \qquad (n \geq 1).
\end{equation}
Note that $b_{d,n}=0$ whenever $(n,d)>1$, and that the inner sum in (\ref{bdn}) depends only on $n \mod{d}$. If $n=1$, then (\ref{bdn}) simplifies to
\begin{equation}\label{bd1}
b_{d,1} =  \frac{1}{\phi(d)} \sum_{\chi \in S(d)}\tau(\bar\chi) \cdot w(f \otimes \chi).
\end{equation}

\section{Reduction to a local computation}\label{reduction local}

In this section, we show that $b_{d,n}$ is a product of local terms depending only on the local automorphic representations associated to $f$, thereby reducing the non-vanishing of $b_{d,n}$ to a local question.

The basic observation is that if $d=p_1^{m_1} \ldots p_k^{m_k}$ is the prime factorization of $d$, then we have a natural bijection $S(d) \cong S(p_1^{m_1}) \times \cdots \times S(p_k^{m_k})$. Moreover $S(p)$ (resp. $S(p^m)$ with $m \geq 2$) is the set of Dirichlet characters modulo $p$ (resp. of conductor $p^m$). We will show that the summand in (\ref{bdn}) decomposes accordingly as a product of local terms. We shift to the adelic language, which is more convenient for our purposes.

Let $\A_\Q$ be the ring of ad\`eles of $\Q$. We view Dirichlet characters as characters of $\A_\Q^\times /\Q^\times$ as follows. We attach to $\chi \in S(d)$ the unique (continuous) character $\chi_\A : \A_\Q^\times /\Q^\times \to \C^\times$ such that for any $p \not\in \Sigma_d$, we have $\chi_\A(\varpi_p)=\chi(p)$, where $\varpi_p$ denotes a uniformizer of $\Q_p^\times \subset \A_\Q^\times$. For any $p \in \Sigma_d$, we denote by $\chi_p : \Q_p^\times \to \C^\times$ the $p$-component of $\chi_\A$. Letting $m_p=v_p(d)$, we have $\chi_p(1+p^{m_p}\Z_p)=1$. A word of caution is in order here: with the above convention, the map $\Z_p^\times/(1+p^{m_p}\Z_p) \to \C^\times$ induced by $\chi_p$ is the inverse of the $p$-component of $\chi$.

The level of a non-trivial additive character $\psi : \Q_p \to \C^\times$ is the unique integer $\ell \in \Z$ such that $\ker(\psi)=p^\ell \Z_p$. For any character $\psi : \Q_p \to \C^\times$ of level $m_p=v_p(d)$, we define the local Gauss sum of $\chi \in S(d)$ at $p$ by
\begin{equation}
\tau(\chi_p,\psi) = \sum_{x \in \Z_p^\times/(1+p^{m_p}\Z_p)} \chi_p(x) \psi(x).
\end{equation}

\begin{lem}\label{lem tauchi}
For any $n \in (\Z/d\Z)^\times$, there exist characters $\psi'_p : \Q_p \to \C^\times$ of respective levels $m_p=v_p(d)$ such that
\begin{equation}\label{tauchi}
\tau(\bar\chi) \cdot \bar\chi(n) = \prod_{p \in \Sigma_d} \tau(\chi_p,\psi'_p) \qquad (\chi \in S(d)).
\end{equation}
\end{lem}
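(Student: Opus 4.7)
The plan is to decompose the global Gauss sum into a product of local factors via the Chinese Remainder Theorem, after absorbing the factor $\bar\chi(n)$ into a twist of the additive character. The starting point is the change of variable $b \equiv an \pmod d$ (legitimate since $(n,d)=1$), which gives
$$\tau(\bar\chi) \cdot \bar\chi(n) = \sum_{b \in (\Z/d\Z)^\times} \bar\chi(b)\, e^{2\pi i n^{-1}b/d}.$$

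Next, I would fix a partial-fraction decomposition $n^{-1}/d \equiv \sum_{p \in \Sigma_d} \alpha_p \pmod \Z$ with $\alpha_p = a_p/p^{m_p} \in \Q$ and $(a_p,p)=1$; this exists because $n^{-1}/d$ is a fraction of reduced denominator $d$. For each $p \in \Sigma_d$, let $\chi^{(p)}$ denote the $p$-component of $\chi$ as a classical Dirichlet character modulo $p^{m_p}$, so that $\chi = \prod_p \chi^{(p)}$ under the CRT isomorphism $(\Z/d\Z)^\times \simeq \prod_{p \in \Sigma_d}(\Z/p^{m_p}\Z)^\times$, $b \leftrightarrow (b_p)_p$. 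A direct check (using $p^{m_p} \mid b-b_p$ to absorb error terms into $\Z$) yields $n^{-1}b/d \equiv \sum_p \alpha_p b_p \pmod \Z$, and hence
$$\tau(\bar\chi) \cdot \bar\chi(n) = \prod_{p \in \Sigma_d}\ \sum_{b_p \in (\Z/p^{m_p}\Z)^\times} \bar\chi^{(p)}(b_p)\, e^{2\pi i \alpha_p b_p}.$$

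Finally, I would define $\psi'_p : \Q_p \to \C^\times$ by $\psi'_p(x) = \psi^{\circ}_p(\alpha_p x)$, where $\psi^{\circ}_p$ is the standard additive character of $\Q_p$ with kernel $\Z_p$. Since $v_p(\alpha_p) = -m_p$, one has $\ker \psi'_p = p^{m_p}\Z_p$, so $\psi'_p$ has level $m_p$ as required. Recalling the convention (flagged in the paragraph just before the lemma) that $\chi_p|_{\Z_p^\times}$ descends to $\bar\chi^{(p)}$ on $(\Z/p^{m_p}\Z)^\times$, each inner sum above is precisely $\tau(\chi_p,\psi'_p)$, which yields the identity (\ref{tauchi}).

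The argument is essentially bookkeeping, and I would not expect a real obstacle. What requires care is aligning the inverse convention linking $\chi_p$ with its classical Dirichlet counterpart $\chi^{(p)}$, and verifying that for $\chi \in S(d)$ — where each $\chi^{(p)}$ is either primitive of conductor $p^{m_p}$, or trivial with $m_p = 1$ — the definitions fit together so that $\psi'_p$ has exactly level $m_p$. No analytic or representation-theoretic input is needed.
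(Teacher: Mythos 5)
Your proof is correct and follows exactly the paper's (much terser) argument: absorb $\bar\chi(n)$ into a shift of the additive character via $b=an$, then split the resulting Gauss sum into local factors by the Chinese remainder theorem, checking that each local character $\psi'_p$ has level $m_p$. The extra care you take with the inverse convention relating $\chi_p$ to the classical component $\chi^{(p)}$ is exactly the bookkeeping the paper leaves implicit.
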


\begin{proof}
Multiplying $\tau(\bar\chi)$ by $\bar\chi(n)$ only amounts to change the additive character in the definition of the Gauss sum of $\bar\chi$. The lemma now follows from the Chinese remainder theorem.
\end{proof}

Let $\pi_f$ be the automorphic representation of $\GL_2(\A_\Q)$ associated to $f$ \cite[\S 2.1]{loeffler-weinstein}. For any $\chi \in S(d)$, we have a canonical isomorphism $\pi_{f \otimes \chi} \cong \chi \pi_f$, where the latter representation is $g \mapsto \chi_\A(\det g) \pi_f(g)$. The $L$-function of $\pi_{f \otimes \chi}$ satisfies a functional equation \cite[Thm 11.1]{jacquet-langlands}
\begin{equation}\label{eq func}
L(\pi_{f \otimes \chi},s) = \epsilon(\pi_{f \otimes \chi},s) L(\pi_{f \otimes \bar\chi},1-s),
\end{equation}

Fix an additive character $\psi = \prod_v \psi_v : \A_\Q/\Q \to \C^\times$ such that $\psi_p$ has level one for every $p \in \Sigma_d$. By \cite[\S 11]{jacquet-langlands}, we have
\begin{equation}
\epsilon(\pi_{f \otimes \chi},s) = \prod_v \epsilon(\pi_{f \otimes \chi,v},s,\psi_v)
\end{equation}
where $v$ runs through the places of $\Q$, and $\pi_{f \otimes \chi,v}\cong \chi_v \pi_{f,v}$ denotes the local component of $f \otimes \chi$ at $v$. The quantity $\epsilon(\pi_{f \otimes \chi,v},s,\psi_v)$ is the Godement-Jacquet local constant of $\pi_{f \otimes \chi}$.

For any character $\chi$ of $\Q_p^\times$, we let $\widetilde\chi$ be the unique character of $\Q_p^\times$ such that $\widetilde{\chi}(p)=1$ and $\widetilde{\chi}|_{\Z_p^\times} = \chi |_{\Z_p^\times}$. The following proposition shows that $w(f \otimes \chi)$ can be written as a product of local constants.

\begin{pro}\label{pro wfchi}
There exist a constant $C \in \C^\times$ and an element $a \in (\Z/d\Z)^\times$, depending on $f$ and $\psi$ but not on $\chi$, such that
\begin{equation}\label{wfchi}
w(f \otimes \chi) = C \cdot \chi(a) \prod_{p \in \Sigma_d} \epsilon(\widetilde{\chi}_p \pi_{f,p},\frac12,\psi_p) \qquad (\chi \in S(d)).
\end{equation}
\end{pro}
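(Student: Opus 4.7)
The plan is to realize $w(f\otimes\chi)$ as (a universal constant times) a global $\epsilon$-factor, factor the latter into local $\epsilon$-factors using \eqref{eq func}, and then isolate the $\chi$-dependence into a single evaluation $\chi(a)$.

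For any newform $F$ of weight $2$ and level exactly $N$, the pseudo-eigenvalue $w(F)$ equals the global $\epsilon$-factor $\epsilon(\pi_F,1/2)$ up to a constant depending only on $N$ and $\psi$. Since $f$ is minimal by twist and $d^2\mid N$, $f\otimes\chi$ has level exactly $N$ for every $\chi\in S(d)$. Using the canonical isomorphism $\pi_{f\otimes\chi}\cong\chi_\A\pi_f$ and the local decomposition appearing in \eqref{eq func}, we obtain
\[
w(f\otimes\chi) \;=\; C_0\cdot\prod_v\epsilon(\chi_v\pi_{f,v},\tfrac12,\psi_v).
\]

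Next I would analyze each place. At $p\nmid Nd$ the local factor is $1$ since everything is unramified and $\psi_p$ has level $0$. At $\infty$ the factor depends on $\chi$ only through $\chi_\infty(-1)=\chi(-1)$. At $p\mid N$ with $p\notin\Sigma_d$, the character $\chi_p$ is unramified with $\chi_p(\varpi_p)=\chi(p)$, and the standard unramified-twist formula yields $\chi(p)^{a(\pi_{f,p})}\epsilon(\pi_{f,p},1/2,\psi_p)$, contributing overall a factor $\chi(a_1)\cdot C_1$ with $a_1=\prod_{p\mid N,\,p\notin\Sigma_d}p^{a(\pi_{f,p})}$ coprime to $d$ and $C_1$ independent of $\chi$. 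Finally, at $p\in\Sigma_d$ I would write $\chi_p=\widetilde\chi_p\mu_p$ with $\mu_p$ the unramified character sending $\varpi_p$ to $\chi_p(\varpi_p)$; the unramified-twist formula applied to $\mu_p$ twisting $\widetilde\chi_p\pi_{f,p}$ then gives
\[
\epsilon(\chi_p\pi_{f,p},\tfrac12,\psi_p) \;=\; \chi_p(\varpi_p)^{v_p(N)+2}\cdot\epsilon(\widetilde\chi_p\pi_{f,p},\tfrac12,\psi_p),
\]
where the exponent is independent of $\chi$ precisely because minimality by twist forces $a(\widetilde\chi_p\pi_{f,p}) = v_p(N)$.

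The last step is to convert the extraneous values $\chi_p(\varpi_p)$ for $p\in\Sigma_d$ into genuine $\chi$-evaluations. For each $p\in\Sigma_d$, triviality of $\chi_\A$ on the diagonal $\Q^\times$ evaluated at the rational number $p$ gives
\[
\chi_p(\varpi_p) \;=\; \chi_\infty(p)^{-1}\prod_{q\in\Sigma_d\setminus\{p\}}\chi_q(p)^{-1},
\]
in which each $\chi_q(p)$ for $q\neq p$ is a Dirichlet-character evaluation at the unit $p\in(\Z/q^{v_q(d)}\Z)^\times$. Substituting and collecting all character evaluations---from the archimedean factor, from the primes $p\mid N$ with $p\notin\Sigma_d$, and from the identities above---yields a single value $\chi(a)$ at an integer $a\in(\Z/d\Z)^\times$ depending only on $f$ and $\psi$. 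The main obstacle will be establishing the invariance $a(\widetilde\chi_p\pi_{f,p})=v_p(N)$ (using minimality by twist together with $d^2\mid N$) and the combinatorial bookkeeping needed to combine the various $\chi$-evaluations into the single value $\chi(a)$; the remainder is essentially routine manipulation with local $\epsilon$-factors and the global product formula.
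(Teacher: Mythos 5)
Your proposal is correct and follows essentially the same route as the paper: identify $w(f\otimes\chi)$ with the global $\epsilon$-factor up to a $\chi$-independent constant, factor it into local constants, use the unramified-twist formula (with the exponent constant in $\chi$ thanks to minimality by twist) to replace $\chi_p$ by $\widetilde\chi_p$ at $p\in\Sigma_d$, and absorb the resulting values $\chi_p(\varpi_p)$ and the contributions from $p\mid N$, $p\notin\Sigma_d$ into a single evaluation $\chi(a)$ via triviality of $\chi_\A$ on $\Q^\times$. The only cosmetic difference is at the archimedean place, where the paper cites Gelbart to get the constant $-1$ while you allow a possible $\chi(-1)$ factor, which is harmlessly absorbed into $\chi(a)$.
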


\begin{proof}
Let $L(f \otimes \chi,s)$ be the usual $L$-function of $f \otimes \chi$. It relates to the automorphic $L$-function by $L(\pi_{f \otimes \chi},s-\frac12) = (2\pi)^{-s} \Gamma(s) L(f \otimes \chi,s)$. Comparing (\ref{eq func}) with the usual functional equation yields
\begin{equation}
w(f \otimes \chi) = -N^{s-\frac12} \epsilon(\pi_{f \otimes \chi},s).
\end{equation}
By \cite[Prop 5.21, Thm 6.16]{gelbart}, we have $\epsilon(\pi_{f \otimes \chi,\infty},s,\psi_\infty)=-1$, so we get
\begin{equation}
w(f \otimes \chi) =  \prod_{p \in \Sigma_N} \epsilon(\pi_{f \otimes \chi,p},\frac12,\psi_p) = \prod_{p \in \Sigma_N} \epsilon(\chi_p \pi_{f,p},\frac12,\psi_p).
\end{equation}
It follows from the definition of the epsilon factor \cite[\S 24.2]{bushnell-henniart} that there exists an integer $b_p \in \Z$ not depending on $\chi_p$ such that for every unramified character $\omega_p : \Q_p^\times \to \C^\times$, we have
\begin{equation}
\epsilon(\omega_p \chi_p \pi_{f,p},s,\psi_p) = \omega_p(p^{b_p}) \epsilon(\chi_p \pi_{f,p},s,\psi_p).
\end{equation}
Choosing $\omega_p$ such that $\omega_p \chi_p = \widetilde{\chi}_p$, and noting that
\begin{equation*}
\prod_{p \in \Sigma_N} \bar\omega_p(p^{b_p}) = \prod_{p \in \Sigma_N} \chi_p(p^{b_p})
\end{equation*}
may be written $\chi(a)$ with $a \in (\Z/d\Z)^\times$ not depending on $\chi$, we get the result by taking $C=\prod_{p \in \Sigma_N-\Sigma_d} \epsilon(\pi_{f,p},\frac12,\psi_p)$.
\end{proof}

The map $\chi \mapsto (\widetilde{\chi}_p)_{p \in \Sigma_d}$ provides a bijection $S(d) \cong \prod_{p \in \Sigma_d} \widetilde{S}(p^{m_p})$, where $\widetilde{S}(p^m)$ is the set of characters $\omega : \Q_p^\times \to \C^\times$ such that $\omega(p)=1$, $\omega(1+p^m\Z_p)=1$, and $\omega(1+p^{m-1}\Z_p) \neq 1$ if $m \geq 2$. Putting together the formulas (\ref{bdn}), (\ref{tauchi}) and (\ref{wfchi}), we get
\begin{equation}\label{bdn product}
b_{d,n} = \frac{C}{\phi(d)} \cdot a_n(f) \prod_{p \in \Sigma_d} \sum_{\chi_p \in \widetilde{S}(p^{m_p})} \tau(\chi_p,\psi'_p) \cdot \epsilon(\chi_p \pi_{f,p},\frac12,\psi_p)
\end{equation}
for some characters $\psi'_p : \Q_p \to \C^\times$ of respective levels $m_p=v_p(d)$.

Let $E$ be an elliptic curve over $\Q$ of conductor $N$. Assume that the newform $f_E$ associated to $E$ is minimal by twist. For any prime $p$ such that $p^2 | N$, the local component $\pi_{f_E,p}$ is an irreducible cuspidal representation of $\GL_2(\Q_p)$ by \cite[Prop. 2.8]{loeffler-weinstein}. Theorem \ref{main result} is thus reduced to the following purely local statement.

\begin{thm}\label{main result local}
Let $\pi$ be an irreducible cuspidal representation of\linebreak $\GL_2(\Q_p)$ with trivial central character and of conductor $p^n$ with $n \geq 2$. Assume that $\pi$ is twist minimal, and that $\pi$ can be realized over $\Q$. Let $m$ be an integer such that $1 \leq m \leq n/2$. Then for any characters $\psi,\psi' : \Q_p \to \C^\times$ of respective levels $1$ and $m$, we have
\begin{equation}\label{sum pi pm}
\sum_{\chi \in \widetilde{S}(p^m)} \tau(\chi,\psi') \epsilon(\chi \pi,\frac12,\psi) \neq 0.
\end{equation}
\end{thm}

\begin{remark}
There are examples of newforms $f$ of weight $2$ on $\Gamma_0(N)$ such that the local component $\pi_{f,p}$ is cuspidal and twist minimal, but the sum (\ref{sum pi pm}) vanishes (see Remark \ref{rk 625}). Therefore the assumption that $\pi$ can be realized over $\Q$ is necessary.
\end{remark}

\section{Cuspidal inducing data}

In this section we recall how cuspidal representations of $\GL_2(\Q_p)$ can be described in terms of cuspidal inducing data, and we recast Theorem \ref{main result local} in terms of this data using a formula of Bushnell for the local constant.

Let $\pi$ be an irreducible cuspidal representation of $G=\GL_2(\Q_p)$ with trivial central character and of conductor $p^n$ with $n \geq 2$. By the classification theorem \cite[15.5,15.8]{bushnell-henniart}, the representation $\pi$ is induced by a cuspidal datum : there exist a maximal compact-mod-center subgroup $K$ of $G$, and an irreducible complex representation $\xi$ of $K$, such that $\pi \cong  \cInd_K^G \xi$, where $\cInd$ denotes compact induction.

Since $\pi$ has trivial central character, the restriction of $\xi$ to the center $Z=\Q_p^\times$ of $G$ is trivial, and since $K/Z$ is compact, $\xi$ is finite-dimensional.  The contragredient of $\xi$ is defined by $\check\xi(k)=\xi(k^{-1})^*$. Finally, note that $\chi \pi \cong \cInd_K^G (\chi \xi)$ for any character $\chi : \Q_p^\times \to \C^\times$.

There are two maximal compact-mod-center subgroups of $G$ up to conjugacy, namely $K'=p^\Z \cdot \GL_2(\Z_p)$ and $K''=\begin{pmatrix} 0 & 1 \\ p & 0 \end{pmatrix}^\Z \cdot \begin{pmatrix} \Z_p^\times & \Z_p \\ p\Z_p & \Z_p^\times \end{pmatrix}$. They are equipped with a canonical decreasing sequence of compact normal subgroups $(K_n)_{n \geq 0}$, which are defined as follows.

If $K=K'$ then $K_0=\GL_2(\Z_p)$ and $K_n=1+p^nM_2(\Z_p)$ for any $n \geq 1$. Note that $K_0/K_n \cong \GL_2(\Z/p^n\Z)$.

If $K=K''$ then $K_0=\begin{pmatrix} \Z_p^\times & \Z_p \\ p\Z_p & \Z_p^\times \end{pmatrix}$ and $K_n=1+\mathfrak{P}^n$ for any $n \geq 1$, where $\mathfrak{P} = \begin{pmatrix} p\Z_p & \Z_p \\ p\Z_p & p\Z_p \end{pmatrix}$.

The \emph{conductor} $r(\xi)$ of $\xi$ is the least integer $r \geq 1$ such that $\xi(K_r)=1$. The relation between the conductors of $\pi$ and $\xi$ is as follows \cite[A.3]{breuil-mezard}.

If $n=2m$ is even, we are in the \emph{unramified case}: we have $K=K'$ and $r(\xi)=m$, and we define $c=p^{1-m} \cdot I_2 \in K$.

If $n=2m+1$ is odd, we are in the \emph{ramified case}: we have $K=K''$ and $r(\xi)=2m$, and we define $c=\begin{pmatrix} 0 & -p^{-m} \\ p^{1-m} & 0 \end{pmatrix} \in K$.

The proof of Theorem \ref{main result local} relies on the following explicit formula, due to Bushnell, for the Godement-Jacquet local constant of $\pi$.

\begin{thm}\cite[25.2 Thm]{bushnell-henniart}\label{bushnell's formula}
Let $r$ be the conductor of $\xi$, and let $m=\lfloor \frac{n}{2} \rfloor$. If $\psi : \Q_p \to \C^\times$ is a character of level one, then
\begin{equation}
\sum_{x \in K_0/K_r} \psi(\tr(cx)) \check{\xi}(cx) = p^{2m} \epsilon(\pi,\frac12,\psi) \cdot \id.
\end{equation}
\end{thm}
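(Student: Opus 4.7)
The plan is to derive the formula from the Godement-Jacquet local functional equation for $\pi$, by choosing test data adapted to the compact-induction description $\pi \cong \cInd_K^G \xi$ and computing both zeta integrals directly. Recall the setup: for an $\End(\xi)$-valued matrix coefficient $f_\pi$ of $\pi$ and a Schwartz-Bruhat function $\Phi$ on $M_2(\Q_p)$, one defines $Z(\Phi,f_\pi,s)=\int_G \Phi(g)f_\pi(g)|\det g|^{s+\frac12}\,dg$, and the local $\gamma$-factor is characterized by the functional equation $Z(\hat\Phi,\check f_\pi,1-s)=\gamma(\pi,s,\psi)\cdot Z(\Phi,f_\pi,s)$, where $\hat\Phi$ is the Fourier transform with respect to $\psi\circ\tr$ and the self-dual Haar measure on $M_2(\Q_p)$. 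Since $\pi$ is cuspidal, $L(\pi,s)=L(\check\pi,s)=1$ and $\epsilon=\gamma$; cuspidality also ensures that $f_\pi(g)=\check\xi(g)\cdot 1_K(g)$, extended by zero to $G$, is a genuine $\End(\xi)$-valued matrix coefficient of $\pi$, so both sides of the functional equation are operators on the space of $\xi$.

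Next, I would take $\Phi=1_{cK_r}$, the characteristic function of the coset $cK_r\subset M_2(\Q_p)$. The element $c$ given in the statement is a uniformizer of the inverse different of the hereditary order $\mathfrak{A}$ attached to $K$, and this choice makes the Fourier transform of $\Phi$ explicit: a direct Fourier-inversion computation, using the self-duality of the lattice pair underlying $\mathfrak{A}$ and $c$ with respect to $\psi\circ\tr$, shows that $\hat\Phi(y)$ equals, up to an explicit power of $p$, $\psi(\tr(cy))\cdot 1_{K_0}(y)$. With these choices, $Z(\Phi,f_\pi,s)$ collapses to a single summand proportional to $\check\xi(c)\cdot\vol(K_r)$, whereas $Z(\hat\Phi,\check f_\pi,1-s)$, after the substitution $g=cx$ with $x\in K_0$ and the use of $K_r$-invariance of $\xi$, reduces to a constant multiple of $\sum_{x\in K_0/K_r}\psi(\tr(cx))\check\xi(cx)$.

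Equating the two sides of the functional equation and carefully tracking the Haar-measure normalizations — self-dual additive on $M_2(\Q_p)$, the associated multiplicative measure on $G$, and counting measure on $K_0/K_r$ — yields the claimed identity with the constant coming out to exactly $p^{2n}$ in both the unramified case ($v_p(N)=2n$) and the ramified case ($v_p(N)=2n+1$). That the operator on the left-hand side is a scalar follows from Schur's lemma applied to the irreducible $\xi$: conjugation by $K_0$ permutes the cosets $xK_r$ while preserving $\tr(cx)\bmod\Z_p$, and together with the equivariance of $\check\xi$ this forces the sum to commute with $\xi(K_0)$. The main obstacle I anticipate is the Fourier-transform identity for $\hat\Phi$ in the ramified case, where the lattice $cK_r$ sits inside the non-maximal order $\mathfrak{A}$ and the self-duality statement genuinely requires the explicit formula for $c$, together with the verification that the normalization power of $p$ comes out the same in both cases.
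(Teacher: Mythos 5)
The paper does not prove this statement: it is quoted directly from Bushnell--Henniart (25.2 Thm), so the relevant comparison is with the proof in that reference. Your sketch follows the same strategy as theirs --- the Godement--Jacquet functional equation applied to the $\End(\xi)$-valued coefficient $\check\xi\cdot 1_K$ and a characteristic-function test datum, with $\epsilon=\gamma$ because $L(\pi,s)=L(\check\pi,s)=1$. One small point on the scalarity: your conjugation argument should be run over all $y\in K$, not just $K_0$, since in the ramified case $\xi|_{K_0}$ need not be irreducible ($ZK_0$ has index $2$ in $K''$); the same substitution $x\mapsto (c^{-1}y^{-1}c)\,x\,y$ works for every $y\in K$ because $c$ and $y$ normalize $K_0$ and $K_r$ and $\tr(c\,x')=\tr(y^{-1}cxy)=\tr(cx)$, so Schur's lemma applies to the irreducible $\check\xi$ on $K$.

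The one substantive inaccuracy is your formula for $\hat\Phi$. Writing $cK_r=c+c\mathfrak{P}^r$ as an additive coset, one gets $\hat\Phi(y)=\vol(c\mathfrak{P}^r)\,\psi(\tr(cy))\,1_{L}(y)$ where $L$ is the lattice dual to $c\mathfrak{P}^r$ under $\psi\circ\tr$; a direct check (in both cases) shows $L$ is the full order $\mathfrak{A}$ (that is, $M_2(\Z_p)$, resp.\ the standard non-maximal order), \emph{not} its unit group $K_0$. Hence the second zeta integral runs over $K\cap\mathfrak{A}$, which is the disjoint union of infinitely many cosets $c_0^kK_0$, $k\geq 0$ (with $c_0=p$ in the unramified case and $c_0=\left(\begin{smallmatrix}0&1\\p&0\end{smallmatrix}\right)$ in the ramified case), so it is a priori a power series in $p^{-s}$ whose coefficients are Gauss-type sums; it does not ``reduce to a constant multiple'' of the $K_0$-sum on the nose. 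The missing step is the degree-matching argument: since both $L$-factors are $1$, the $\gamma$-factor is a nonzero constant times a single power of $p^{-s}$, and $Z(\Phi,f_\pi,s)$ is a monomial, so the functional equation forces all but one coefficient of that power series to vanish; identifying the surviving power of $p^{-s}$ is also what pins down the normalizing factor $p^{2n}$. This is precisely how Bushnell--Henniart close the argument, and without it your computation of the right-hand side does not go through as written.
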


We now express the sum of local constants appearing in Theorem \ref{main result local} in terms of $\xi$.

\begin{pro}\label{somme chi}
Let $r$ be the conductor of $\xi$, and let $m=\lfloor \frac{n}{2} \rfloor$. Let $k$ be an integer such that $1 \leq k \leq m$. For any characters $\psi,\psi' : \Q_p \to \C^\times$ of respective levels $1$ and $k$, the sum
\begin{equation}
\sum_{\chi \in \widetilde{S}(p^k)} \tau(\chi,\psi') \epsilon(\chi \pi,\frac12,\psi)
\end{equation}
is the unique eigenvalue of the scalar endomorphism
\begin{equation}\label{somme xi}
\frac{p-1}{p^{2m-k+1}} \sum_{x \in K_0/K_r} \psi(\tr(cx)) \psi'(\det x) \check{\xi}(cx).
\end{equation}
\end{pro}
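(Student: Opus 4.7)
The plan is to apply Bushnell's formula (Theorem \ref{bushnell's formula}) to each twisted representation $\chi\pi$, then interchange the order of summation and evaluate the resulting character sum over $\widetilde{S}(p^m)$. I would begin by noting that $\chi\pi \cong \cInd_K^G(\chi\xi)$ with $(\chi\xi)(g) = \chi(\det g)\,\xi(g)$, and that the inducing datum $\chi\xi$ has the same conductor as $\xi$. Indeed, the character $\chi \circ \det$ is trivial on $K_{r(\xi)}$ because $\det(K_{r(\xi)})$ lies in $1+p^{r(\xi)}\Z_p \subset 1+p^m\Z_p = \ker(\chi|_{\Z_p^\times})$, so the same data $(K,c,r(\xi))$ appearing in Bushnell's formula applies uniformly to every $\chi\pi$.

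Setting $r = r(\xi)$, Bushnell's formula for $\chi\pi$ reads
\begin{equation*}
p^{2n}\,\epsilon(\chi\pi,\tfrac12,\psi)\cdot\id \;=\; \sum_{x \in K_0/K_r}\psi(\tr(cx))\,\check{\chi\xi}(cx).
\end{equation*}
Since $\chi(p)=1$ for $\chi \in \widetilde{S}(p^m)$ and $\det(c)$ is a power of $p$, the twist collapses nicely: $\check{\chi\xi}(cx)=\bar\chi(\det x)\,\check\xi(cx)$. Multiplying by $\tau(\chi,\psi')$, summing over $\chi$, and swapping the two sums yields
\begin{equation*}
p^{2n}\sum_{\chi}\tau(\chi,\psi')\,\epsilon(\chi\pi,\tfrac12,\psi)\cdot\id \;=\; \sum_{x \in K_0/K_r}\psi(\tr(cx))\,\check\xi(cx)\,\Bigl(\sum_{\chi \in \widetilde{S}(p^m)}\tau(\chi,\psi')\,\bar\chi(\det x)\Bigr).
\end{equation*}

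The crux of the proof is the inner character sum identity
\begin{equation*}
\sum_{\chi \in \widetilde{S}(p^m)}\tau(\chi,\psi')\,\bar\chi(u) \;=\; \phi(p^m)\,\psi'(u) \qquad (u \in \Z_p^\times),
\end{equation*}
which I would establish by expanding $\tau(\chi,\psi')$, exchanging sums, and separating characters of exact conductor $p^m$ from those of smaller conductor via inclusion-exclusion. The case $m=1$ reduces to orthogonality on $(\Z/p\Z)^\times$; for $m \geq 2$, a boundary term arises that is killed by the fact that $t \mapsto \psi'(up^{m-1}t)$ is a non-trivial additive character of $\Z/p\Z$ — and this is where the exactness of the level of $\psi'$ is indispensable. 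Substituting back and using $\phi(p^m)/p^{2n} = (p-1)/p^{2n-m+1}$ recovers formula (\ref{somme xi}); since the left-hand side is manifestly a scalar multiple of $\id$, so is the right-hand side, with the stated unique eigenvalue. The main obstacle is this character sum, whose proof depends crucially on the joint exactness of $\chi$ and $\psi'$; a secondary point is the conductor compatibility $r(\chi\xi)=r(\xi)$, which requires brief verification, particularly in the borderline case $m=n$ where $r(\chi\xi)$ could a priori drop.
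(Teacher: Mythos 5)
Your argument is correct and is essentially the paper's own proof: apply Bushnell's formula to each twist $\chi\pi=\cInd_K^G(\chi\xi)$, use $\chi(p)=1$ to absorb $\bar\chi(\det c)$, swap the sums, and evaluate the inner character sum $\sum_{\chi\in\widetilde S(p^m)}\tau(\chi,\psi')\bar\chi(u)=\phi(p^m)\psi'(u)$ by separating the $m=1$ case (plain orthogonality) from $m\geq 2$, where the correction term from characters of smaller conductor is killed precisely because $\psi'$ has exact level $m$. The one point you flag, $r(\chi\xi)=r(\xi)$, is genuinely needed (your containment argument only gives $r(\chi\xi)\leq r(\xi)$, and Bushnell's formula requires the exact conductor), and it is settled by the standing hypothesis that $f$ is minimal by twist: a drop in $r(\chi\xi)$ would lower the conductor of $\chi\pi$, hence the level of $f\otimes\chi$ — this is exactly the justification the paper gives.
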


\begin{proof}
Let $\chi \in \widetilde{S}(p^k)$. Since $\pi$ is minimal by twist, we have $r(\chi \xi)=r$ and Theorem \ref{bushnell's formula} gives
\begin{equation}\label{somme chi 1}
\sum_{x \in K_0/K_r} \psi(\tr(cx)) \bar\chi(\det(cx)) \check{\xi}(cx) = p^{2m} \epsilon(\chi\pi,\frac12,\psi) \cdot \id.
\end{equation}
Because $\det(c)$ is a power of $p$, we have $\bar\chi(\det(c))=1$. Multiplying the left hand side of (\ref{somme chi 1}) by $\tau(\chi,\psi')$ and summing over $\chi$, we get
\begin{align}
\nonumber & \sum_{\chi \in \widetilde{S}(p^k)} \sum_{y \in (\Z/p^k\Z)^\times} \chi(y) \psi'(y) \sum_{x \in K_0/K_r} \psi(\tr(cx)) \bar\chi(\det x) \check{\xi}(cx)\\
\label{somme chi 3} = & \sum_{x \in K_0/K_r} \psi(\tr(cx)) \check{\xi}(cx) \sum_{y \in (\Z/p^k\Z)^\times} \psi'(y) \sum_{\chi \in S(p^k)} \chi(y) \bar\chi(\det x).
\end{align}
Let $C(p^k)$ be the set of all Dirichlet characters modulo $p^k$. For $a \in (\Z/p^k\Z)^\times$, the sum $\sum_{\chi \in C(p^k)} \chi(a)$ equals $p^{k-1}(p-1)$ if $a=1$, and $0$ otherwise. So for $k=1$, (\ref{somme chi 3}) simplifies to
\begin{equation}\label{somme chi 4}
(p-1)\sum_{x \in K_0/K_r} \psi(\tr(cx)) \psi'(\det x) \check{\xi}(cx).
\end{equation}
If $k \geq 2$ then $\widetilde{S}(p^k)=C(p^k)-C(p^{k-1})$ so that (\ref{somme chi 3}) can be written
\begin{align}
\nonumber & p^{k-1}(p-1) \sum_{x \in K_0/K_r} \psi(\tr(cx)) \psi'(\det x) \check{\xi}(cx)\\
\label{somme chi 5}  & - p^{k-2}(p-1)\sum_{x \in K_0/K_r} \psi(\tr(cx)) \Biggl(\sum_{\substack{y \in (\Z/p^k\Z)^\times\\ y \equiv \det x \pod{p^{k-1}}}} \psi'(y)\Biggr) \check{\xi}(cx).
\end{align}
Since $\psi'$ has level $k$, the inner sum over $y$ vanishes. In all cases, this gives the proposition as stated.
\end{proof}

\begin{remark}\label{remark bdn}
The formula (\ref{bdn product}), together with Proposition \ref{somme chi}, leads to an explicit formula for the Fourier expansion of $f$ at an arbitrary cusp of $X_0(N)$ purely in terms of the local components of $f$, and may be of independent interest.
\end{remark}

\begin{definition}
For any characters $\psi,\psi' : \Q_p \to \C^\times$ of respective levels $1$ and $m$, we define $T(\xi,\psi,\psi')$ to be the endomorphism
\begin{equation}\label{Txi}
T(\xi,\psi,\psi') = \sum_{x \in K_0/K_{r(\xi)}} \psi(\tr(cx)) \psi'(\det x) \check{\xi}(x).
\end{equation}
\end{definition}
In order to establish Theorem \ref{main result local}, it suffices, thanks to Proposition \ref{somme chi}, to show that $T(\xi,\psi,\psi') \neq 0$. We prove this in the following sections, distinguishing the unramified and ramified cases.

\section{The unramified case}\label{sec unramified}

In this section we assume $n=2m$ with $m \geq 1$, so that $c=p^{1-m} \cdot I_2$. Note that $\psi(\tr(cx)) = \psi(p^{1-m} \tr x)$ and $a \mapsto \psi(p^{1-m}a)$ is a character of level $m$. So we fix characters $\psi,\psi' : \Q_p \to \C^\times$ of respective levels $m,m'$ with $1 \leq m' \leq m$, and we wish to prove that
\begin{equation}\label{Txi unram}
T(\xi,\psi,\psi') := \sum_{x \in \GL_2(\Z/p^m\Z)}  \psi(\tr x) \psi'(\det x)\check{\xi}(x)
\end{equation}
is non-zero. Assuming the contrary, for every $y \in \GL_2(\Z/p^m\Z)$ we have
\begin{align*}
0 = \check{\xi}(y^{-1}) T(\xi,\psi,\psi') & = \sum_{x \in \GL_2(\Z/p^m \Z)}  \psi(\tr x) \psi'(\det x) \check{\xi}(y^{-1} x) \\
& = \sum_{x \in \GL_2(\Z/p^m \Z)}  \psi(\tr(yx)) \psi'(\det (yx)) \check{\xi}(x).
\end{align*}
Taking $y=\begin{pmatrix} 1 & t \\ 0 & 1 \end{pmatrix}$ with $t \in \Z/p^m\Z$ and writing $x=\begin{pmatrix}\alpha & \beta \\ \gamma & \delta\end{pmatrix}$, we have $\tr(yx)=\tr(x)+\gamma t$. We get
\begin{equation}\label{eq 1}
\sum_{x \in \GL_2(\Z/p^m\Z)} \psi(\gamma t) \psi(\tr x) \psi'(\det x)\check{\xi}(x) = 0.
\end{equation}
For any $c_0 \in \Z/p^m\Z$, let $B(c_0) \subset \GL_2(\Z/p^m\Z)$ be the set of matrices of the form $x = \begin{pmatrix} * & * \\ c_0 & * \end{pmatrix}$. Since (\ref{eq 1}) is true for every $t \in \Z/p^m\Z$, we get
\begin{equation}
\sum_{x \in B(c_0)} \psi(\tr x) \psi'(\det x)\check{\xi}(x) = 0 \qquad (c_0 \in \Z/p^m\Z).
\end{equation}
Fix $c_0 \in (\Z/p^m\Z)^\times$. Then every matrix $x \in B(c_0)$ may be written uniquely in the form $x=\begin{pmatrix} 0 & 1 \\ c_0 & 0 \end{pmatrix} \begin{pmatrix} 1 & 0 \\ a & 1 \end{pmatrix} \begin{pmatrix} 1 & b \\ 0 & d \end{pmatrix}$ with $a,b \in \Z/p^m\Z$ and $d \in (\Z/p^m\Z)^\times$. We have $\tr x = a+bc_0$ and $\det x = -dc_0$, so that
\begin{equation}\label{eq 2}
\sum_{\substack{a,b \in \Z/p^m\Z \\ d \in (\Z/p^m\Z)^\times}} \psi(a+bc_0) \psi'(-dc_0) \check{\xi} \begin{pmatrix} 1 & 0 \\ a & 1 \end{pmatrix} \check{\xi} \begin{pmatrix} 1 & b \\ 0 & d \end{pmatrix} = 0 \qquad (c_0 \in (\Z/p^m\Z)^\times).
\end{equation}

We now make use of further properties of the representation $\xi$, for which we refer to \cite{loeffler-weinstein}. Let $V$ be the space of $\check{\xi}$. By \cite[Thm 3.5]{loeffler-weinstein}, the restriction of $\check{\xi}$ to $N=\begin{pmatrix} 1 & \Z_p \\ 0 & 1\end{pmatrix}$ is isomorphic to the direct sum of the additive characters of $\Z_p$ of level $m$, each character appearing with multiplicity $1$. We denote by $V=\bigoplus_\chi V(\chi)$ this direct sum decomposition. Moreover, by the proof of \cite[Thm 3.6]{loeffler-weinstein}, the representation $\pi$ admits a new vector: there exists $v \in V-\{0\}$ which is fixed by all diagonal matrices of $K$. Since $Nv$ spans $V$, the components $v_\chi$ of $v$ with respect to the above decomposition are nonzero. Note that the diagonal matrix $\delta = \begin{pmatrix} 1 & 0 \\ 0 & d \end{pmatrix}$ sends $V(\chi)$ to $V(\chi_d)$, where $\chi_d$ denotes the character $t \mapsto \chi(dt)$. Since $\delta$ fixes $v$, we get $\delta(v_\chi)=v_{\chi_d}$. It follows that
\begin{equation}
\check{\xi}\begin{pmatrix} 1 & b \\ 0 & d \end{pmatrix} v_\chi = \check{\xi}\begin{pmatrix} 1 & 0 \\ 0 & d \end{pmatrix}\check{\xi}\begin{pmatrix} 1 & b \\ 0 & 1 \end{pmatrix} v_\chi = \chi(b) v_{\chi_d}.
\end{equation}
Let $p'_\psi = \sum_{a \in \Z/p^m\Z} \psi(a) \check{\xi}\begin{pmatrix} 1 & 0 \\ a & 1 \end{pmatrix}$. Since the matrix $\begin{pmatrix} 1 & 0 \\ a & 1 \end{pmatrix}$ is conjugate to $\begin{pmatrix} 1 & a \\ 0 & 1 \end{pmatrix}$ and $\psi$ has level $m$, we see that $p'_\psi$ is a projector of rank $1$. Evaluating (\ref{eq 2}) at $v_\chi$, we get
\begin{equation}\label{eq 3}
p'_\psi \sum_{\substack{b \in \Z/p^m\Z \\ d \in (\Z/p^m\Z)^\times}} \psi(bc_0) \psi'(-dc_0) \chi(b) v_{\chi_d} = 0 \qquad (c_0 \in (\Z/p^n\Z)^\times).
\end{equation}
The sum over $b$ is zero unless $\chi(t)=\psi(-c_0 t)$, in which case (\ref{eq 3}) simplifies to
\begin{equation}\label{eq 4}
p'_\psi \sum_{d \in (\Z/p^m\Z)^\times} \psi'(-dc_0) v_{\psi_{-c_0 d}} = 0 \qquad (c_0 \in (\Z/p^m\Z)^\times).
\end{equation}
Let $\lambda \in \Z/p^m\Z$ be such that $\psi'=\psi_\lambda$ (we have $\lambda \neq 0$ since $m' \geq 1$). Then (\ref{eq 4}) implies
\begin{equation}\label{eq 5}
p'_\psi \sum_\chi \chi(\lambda) v_\chi = 0
\end{equation}
where the sum runs over all primitive characters of $\Z/p^m\Z$. This can be rewritten as
\begin{equation}\label{eq 6}
p'_\psi \check{\xi} \begin{pmatrix} 1 & \lambda \\ 0 & 1 \end{pmatrix} v = 0.
\end{equation}
We now make use of the assumption that the representation $\xi$ can be realized over $\Q$. The subspace of $V$ fixed by all diagonal matrices is one-dimensional and is clearly rational. The new vector $v$ can thus be chosen to be rational. By taking Galois conjugates, if (\ref{eq 6}) holds for one particular value of $\psi$, then it holds for all $\psi$ of level $m$. But the projectors $p'_\psi$ add up to the identity of $V$, so that we get $v=0$, a contradiction.

\begin{remark}\label{rk 625}
If we don't assume that $\xi$ can be realized over $\Q$, then it may happen that $T(\xi,\psi,\psi')=0$ even if $\xi$ is twist minimal. We found examples of this phenomenon already in the case $p=5$ and $m=2$. There is a cuspidal representation $\xi$ of $\GL_2(\Z/25\Z)$ of dimension 20 with coefficients in $\Q(\sqrt{5})$ such that $T(\xi,\psi,\psi_\lambda)=0$ if $\lambda \equiv \pm 1 \pmod{5}$. There is a newform $f$ of weight $2$ on $\Gamma_0(625)$ with coefficients in $\Q(\sqrt{5})$ whose Fourier expansion begins with
\begin{equation*}
f = q + \left(\frac{-1-\sqrt{5}}{2}\right) q^2 + \left(\frac{-3-\sqrt{5}}{2}\right) q^3 + \left(\frac{-1+\sqrt{5}}{2}\right) q^4 \ldots
\end{equation*}
such that $\pi_{f,5}$ is induced from $\xi$. The newform $f$ is twist minimal, and the formula (\ref{bdn product}) together with Proposition \ref{somme chi} implies that $\omega_f$ vanishes at the cusps $\lambda/25$ with $\lambda \equiv \pm 1 \pmod{5}$. Hao Chen has checked numerically that it is indeed the case.
\end{remark}

\section{The ramified case}\label{sec ramified}

In this section we assume $n=2m+1$ with $m \geq 1$, so that $c=\begin{pmatrix} 0 & -p^{-m}\\ p^{1-m} & 0 \end{pmatrix}$. Note that $\psi(\tr(cx))= \psi(p^{1-m} \tr' x)$ where the function $\tr' : K_0 \to \Z_p$ is defined by $\tr' \begin{pmatrix} \alpha & \beta \\ p\gamma & \delta \end{pmatrix} = \beta-\gamma$. So we fix characters $\psi,\psi' : \Q_p \to \C^\times$ of respective levels $m,m'$ with $1 \leq m' \leq m$, and we wish to prove that
\begin{equation}\label{Txi ram}
T(\xi,\psi,\psi') := \sum_{x \in K_0/K_{2m}}  \psi(\tr' x) \psi'(\det x)\check{\xi}(x)
\end{equation}
is non-zero. Assume the contrary.

We have explicitly
\begin{equation*}
K_\ell = \begin{pmatrix} 1+p^{\lceil \frac{\ell}{2} \rceil}\Z_p & p^{\lfloor \frac{\ell}{2} \rfloor}\Z_p \\
p^{\lfloor \frac{\ell}{2}\rfloor +1}\Z_p & 1+p^{\lceil \frac{\ell}{2} \rceil}\Z_p \end{pmatrix} \qquad (\ell \geq 1).
\end{equation*}
Moreover, we have an isomorphism of groups
\begin{align*}
K_m/K_{2m} & \xrightarrow{\cong} (\Z/p^{\lfloor \frac{m}{2}\rfloor}\Z)^2 \oplus (\Z/p^{\lceil \frac{m}{2}\rceil}\Z)^2\\
\begin{pmatrix} 1+p^{\lceil \frac{m}{2}\rceil} \alpha & p^{\lfloor \frac{m}{2}\rfloor} \beta\\
p^{\lfloor \frac{m}{2}\rfloor+1} \gamma & 1+p^{\lceil \frac{m}{2}\rceil} \delta
\end{pmatrix} & \mapsto (\alpha,\delta,\beta,\gamma).
\end{align*}

Let $y \in K_m$. Multiplying $T(\xi,\psi,\psi')$ on the right by $\check\xi(y^{-1})$, we get
\begin{equation}\label{ram somme2}
\sum_{x \in K_0/K_{2m}} \psi(\tr'(xy)) \psi'(\det(xy)) \check\xi(x)=0.
\end{equation}
If we fix $x \in K_0$, then the map $\Phi_x : K_m/K_{2m} \to \C^\times$ defined by
\begin{equation}
\psi(\tr'(xy)) \psi'(\det(xy)) = \psi(\tr' x) \psi'(\det x) \Phi_x(y) \qquad (y \in K_m)
\end{equation}
is a character which depends only on the coset $xK_m$.

\begin{lem}\label{lem Phix}
The characters $(\Phi_x)_{x \in K_0/K_m}$ are pairwise distinct.
\end{lem}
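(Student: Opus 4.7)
My plan is to reduce the equality $\Phi_x = \Phi_{x'}$ to a concrete system of $p$-adic congruences on the entries of $x$ and $x'$, and then recognize these as the condition $x^{-1}x' \in K_n$. First I would observe that $\psi'$ plays no role in the argument: since $\det$ is multiplicative,
$$\Phi_x(y) = \psi\bigl(\tr'(xy)-\tr' x\bigr)\,\psi'(\det y),$$
whose second factor is independent of $x$. So $\Phi_x = \Phi_{x'}$ is equivalent to
$$\psi\bigl(\tr'(xy)-\tr'(x'y)-\tr' x+\tr' x'\bigr) = 1 \quad \text{for all } y \in K_n.$$

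Next, writing $x = \begin{pmatrix} a & b \\ pc & d \end{pmatrix}$ and using the explicit parametrization of $K_n/K_{2n}$ recalled just above the lemma, I would compute directly the $(1,2)$ and $(2,1)$ entries of $xy$ and obtain
$$\tr'(xy) - \tr' x = p^{\lfloor n/2\rfloor}(a\beta - d\gamma) + p^{\lceil n/2\rceil}(b\delta - c\alpha).$$
Subtracting the analogous expression for $x'$ and using that $\psi$ has level $n$, the condition $\Phi_x = \Phi_{x'}$ becomes
$$p^{\lfloor n/2\rfloor}\bigl((a-a')\beta - (d-d')\gamma\bigr) + p^{\lceil n/2\rceil}\bigl((b-b')\delta - (c-c')\alpha\bigr) \equiv 0 \pmod{p^n}$$
for all $(\alpha,\delta) \in (\Z/p^{\lfloor n/2\rfloor}\Z)^2$ and $(\beta,\gamma) \in (\Z/p^{\lceil n/2\rceil}\Z)^2$.

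Finally, by specializing $\alpha,\beta,\gamma,\delta$ one at a time I expect to extract the four congruences $a \equiv a'$ and $d \equiv d' \pmod{p^{\lceil n/2\rceil}}$, together with $b \equiv b'$ and $c \equiv c' \pmod{p^{\lfloor n/2\rfloor}}$. In view of the explicit shape of $\mathfrak{P}^n$ recalled in the excerpt, these four conditions say exactly that $x'-x \in \mathfrak{P}^n$. Since $K_n = 1+\mathfrak{P}^n$ and $\mathfrak{P}^n$ is stable under left multiplication by $K_0$, this is equivalent to $x^{-1}x' \in K_n$, i.e.\ $x K_n = x' K_n$, as desired. The whole argument is a direct $p$-adic calculation with no serious obstacle; the only point needing care is to verify that the valuations $\lfloor n/2\rfloor$ and $\lceil n/2\rceil$ align so that each of the four variables isolates a single sharp congruence on a distinct entry, uniformly in the parity of $n$.
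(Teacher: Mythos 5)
Your opening reduction contains a genuine error that removes exactly the difficulty the lemma is about. You write $\Phi_x(y) = \psi(\tr'(xy)-\tr'x)\,\psi'(\det y)$ ``since $\det$ is multiplicative,'' but $\psi'$ is an \emph{additive} character of $\Q_p$ (it comes from the Gauss sum $\tau(\chi,\psi')$), so $\psi'(\det(xy)) = \psi'(\det x \cdot \det y)$ does not factor as $\psi'(\det x)\psi'(\det y)$. The correct factorization (using that $\psi'$ has level $m\le n$ and that $\det y - 1 \equiv s+v$ modulo $p^n$ for $y=\bigl(\begin{smallmatrix}1+s & t\\ pu & 1+v\end{smallmatrix}\bigr)\in K_n$) is $\psi'(\det(xy)) = \psi'(\det x)\,\psi'\bigl((ad-pbc)(s+v)\bigr)$, whose second factor depends on $x$ through $\det x = ad-pbc$. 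So
$$\Phi_x(y)=\psi(at+bv-cs-du)\,\psi'\bigl((ad-pbc)(s+v)\bigr),$$
and the $\psi'$ term couples the entries $b,c$ of $x$ (via $\det x$) to the variables $s,v$ of $y$.

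Because of this coupling, specializing the variables one at a time does \emph{not} isolate sharp congruences on $b$ and $c$ separately. Varying $\beta,\gamma$ (i.e.\ $t,u$) does give $a\equiv a'$, $d\equiv d' \pmod{p^{\lceil n/2\rceil}}$ as you claim, but varying $\delta,\alpha$ (i.e.\ $v,s$) only tells you that $\Phi_x=\Phi_{x'}$ forces
$$\bigl(b+p^{n-m}\lambda(ad-pbc),\,-c+p^{n-m}\lambda(ad-pbc)\bigr)\equiv\bigl(b'+p^{n-m}\lambda(a'd'-pb'c'),\,-c'+p^{n-m}\lambda(a'd'-pb'c')\bigr)$$
modulo $p^{\lfloor n/2\rfloor}$, where $\lambda$ is defined by $\psi'(1)=\psi(p^{n-m}\lambda)$. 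The remaining (and essential) step is to prove that the map $(b,c)\mapsto(b+p^{n-m}\lambda(ad-pbc),\,-c+p^{n-m}\lambda(ad-pbc))$ is injective on $(\Z/p^{\lfloor n/2\rfloor}\Z)^2$; this is not formal when $m=n$, since then the perturbation term is only divisible by $p$ through the factor $pbc$, and one needs a successive-approximation argument (reduce mod $p$, then mod $p^2$, etc.) to conclude $b\equiv b'$, $c\equiv c'$. This injectivity is the actual content of the lemma's proof, and your argument omits it entirely. Your final observation that the four congruences characterize $xK_n=x'K_n$ is fine, but the route to those congruences needs to be repaired as above.
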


\begin{proof}
If $x=\begin{pmatrix} a & b \\ pc & d \end{pmatrix} \in K_0$ and $y=\begin{pmatrix} 1+s & t \\ pu & 1+v \end{pmatrix} \in K_m$, an explicit computation gives
\begin{equation}\label{Phix explicite}
\Phi_x(y) = \psi(at+bv-cs-du) \psi'((ad-pbc)(s+v)).
\end{equation}
Let $x'=\begin{pmatrix} a' & b' \\ pc' & d' \end{pmatrix} \in K_0$ such that $\Phi_x = \Phi_{x'}$. By (\ref{Phix explicite}), we already get $a,d \equiv a',d'  \pod{p^{\lceil \frac{m}{2}\rceil}}$. Let $\lambda \in (\Z/p^{m'}\Z)^\times$ be the unique element such that $\psi'(1)=\psi(p^{m-m'} \lambda)$. It remains to prove that the map
\begin{align}\label{map had}
h_{a,d} : (\Z/p^{\lfloor \frac{m}{2}\rfloor} \Z)^2 & \to (\Z/p^{\lfloor \frac{m}{2}\rfloor} \Z)^2\\
\nonumber (b,c) & \mapsto (b+p^{m-m'}\lambda (ad-pbc),-c+p^{m-m'}\lambda (ad-pbc))
\end{align}
is injective. Assume $h_{a,d}(b,c)=h_{a,d}(b',c')$. Then $b-p^{m-m'+1}\lambda bc \equiv b'-p^{m-m'+1}\lambda b'c' \pod{p^{\lfloor \frac{m}{2}\rfloor}}$ and $c+p^{m-m'+1}\lambda bc \equiv c'+p^{m-m'+1}\lambda b'c' \pod{p^{\lfloor \frac{m}{2}\rfloor}}$. In particular $b,c \equiv b',c' \pod{p}$ and an easy induction gives $b,c \equiv b',c' \pod{p^{\lfloor \frac{m}{2}\rfloor}}$.
\end{proof}

Fix $x_0 \in K_0$. If we multiply (\ref{ram somme2}) by $\bar\Phi_{x_0}(y)$ and sum over $y \in K_m/K_{2m}$, we get
\begin{equation}\label{ram somme3}
\sum_{y \in K_m/K_{2m}} \bar\Phi_{x_0}(y) \sum_{x \in K_0/K_{2m}} \psi(\tr' x) \psi'(\det x) \Phi_x(y) \check\xi(x)=0.
\end{equation}
According to Lemma \ref{lem Phix}, this simplifies to
\begin{equation}\label{ram somme4}
\sum_{x \in x_0 K_m/K_{2m}} \psi(\tr' x) \psi'(\det x) \check\xi(x)=0.
\end{equation}
In other words, for every $x_0 \in K_0$ we have
\begin{equation}\label{ram somme5}
\sum_{y \in K_m/K_{2m}} \Phi_{x_0}(y) \check\xi(y)=0.
\end{equation}

Fix $a_0,d_0 \in (\Z/p^{\lceil \frac{m}{2}\rceil}\Z)^\times$. We sum (\ref{ram somme5}) over all matrices $x_0 \in K_0/K_m$ of the form $x_0=\begin{pmatrix} a_0 & * \\ * & d_0 \end{pmatrix}$. Letting $y=\begin{pmatrix} 1+s & t \\ pu & 1+v \end{pmatrix}$, we compute
\begin{equation}
\sum_{x_0} \Phi_{x_0}(y) = \sum_{b_0,c_0 \in \Z/p^{\lfloor \frac{m}{2}\rfloor}\Z} \psi(a_0t+d_0u) \psi(h_{a_0,d_0}(b_0,c_0) \cdot (v,s))
\end{equation}
where $h_{a_0,d_0}$ is the map of (\ref{map had}). Since $h_{a_0,d_0}$ is bijective, we get
\begin{align*}
\sum_{x_0} \Phi_{x_0}(y) & = \psi(a_0t+d_0u) \sum_{b_0,c_0 \in \Z/p^{\lfloor \frac{m}{2}\rfloor}\Z} \psi(b_0v+c_0s)\\
& = \begin{cases} p^{2\lfloor \frac{m}{2}\rfloor} \psi(a_0t+d_0u) & \textrm{if } s\equiv v \equiv 0 \pod{p^m}\\
0 & \textrm{otherwise.}
\end{cases}
\end{align*}
So for any $a_0,d_0 \in (\Z/p^{\lceil \frac{m}{2}\rceil}\Z)^\times$, we get
\begin{equation}\label{ram somme6}
\sum_{t,u \in p^{\lfloor \frac{m}{2}\rfloor}\Z/p^m\Z}  \psi(a_0t+d_0u) \check\xi \begin{pmatrix} 1 & t \\ pu & 1 \end{pmatrix}=0.
\end{equation}
As in section \ref{sec unramified}, the restriction of $\xi$ to $N=\begin{pmatrix} 1 & \Z_p \\ 0 & 1 \end{pmatrix}$ is isomorphic to the direct sum of the characters of $\Z_p$ of level $m$. Conjugating by the matrix $\begin{pmatrix} 0 & 1 \\ p & 0 \end{pmatrix}$, the same is true for the restriction of $\xi$ to $N' = \begin{pmatrix} 1 & 0 \\ p\Z_p & 1 \end{pmatrix}$. For any $t,u \in p^{\lfloor \frac{m}{2}\rfloor}\Z/p^m\Z$, the matrices $\begin{pmatrix} 1 & t \\ 0 & 1 \end{pmatrix}$ and $\begin{pmatrix} 1 & 0 \\ pu & 1 \end{pmatrix}$ commute. By simultaneous diagonalization, there exists a nonzero vector $v$ in the space of $\check\xi$ and primitive characters $\omega,\omega' : \Z/p^{\lceil \frac{m}{2}\rceil}\Z \to \C^\times$ such that
\begin{equation}
\check\xi \begin{pmatrix} 1 & p^{\lfloor \frac{m}{2}\rfloor} t \\ p^{\lfloor \frac{m}{2}\rfloor+1} u & 1 \end{pmatrix} v = \omega(t) \omega'(u) v \qquad (t,u \in \Z/p^{\lceil \frac{m}{2}\rceil}\Z).
\end{equation}
We may write the characters $\omega, \omega'$ as $\omega(t) = \bar\psi(p^{\lfloor \frac{m}{2}\rfloor}a_0t)$ and $\omega'(u) = \bar\psi(p^{\lfloor \frac{m}{2}\rfloor}d_0u)$ for some $a_0,d_0 \in (\Z/p^{\lceil \frac{m}{2}\rceil}\Z)^\times$. For this choice of $a_0,d_0$, the identity (\ref{ram somme6}) evaluated at $v$ gives a contradiction. This finishes the proof of Theorem \ref{main result}.

\section{Numerical investigations}\label{numerical}

We now report on the computations which led to Theorem \ref{main result}. For all elliptic curves of conductor $\leq 2000$, we computed all the ramification indices at the cusps of the modular parametrizations using \textsc{Pari/GP} \cite{pari250}. Since we have no theoretical formula for the ramification index in general, we just compared numerically $\log |f_E|$ and $\log |q|$ in the neighborhood of the given cusp. This method is not rigorous, but it gives good results in practice. We ended up with a list of 745 isogeny classes of elliptic curves for which the modular parametrization seemed to ramify at some cusp. We then observed and checked, with the help of \textsc{Magma} \cite{magma}, that for each curve in this list, the associated newform was not minimal by twist.

In Table 1 below, we give all instances of ramified cusps for elliptic curves of conductor $\leq 200$ (we restrict to the cusps $[\frac1d]$ with $d^2|N$). In the last column, we indicate a minimal twist for the newform (it need not be unique, for example $96a$ has quadratic twist $96b$). Furthermore, a minimal twist need not have trivial character. For example, the minimal twist of $162b$ and $162c$ is a newform of level 18 and non-trivial character, which we just denote by ``18''.

\begin{table}[h]
\begin{center}
\begin{tabular}{c|c|c|c}
\textrm{Isogeny class} & $d$ & $e_\varphi(\frac1d)$ & \textrm{Minimal twist} \\
\hline
$48a$  & 4 & 2 & $24a$\\
$64a$  & 8 & 2 & $32a$ \\
$80a$  & 4  & 2 & $40a$\\
$80b$  &  4  & 4 & $20a$\\
$112a$ &  4  & 2 & $56b$\\
$112b$ &  4  & 2 & $56a$\\
$112c$ &  4  & 4 & $14a$\\
$144a$ & $\Bigl\{\!\!\begin{array}{c} 4 \\ 12 \end{array}$  & $\begin{array}{c} 4 \\ 4 \end{array}$ & $36a$\\
$144b$ &  $\Bigl\{\!\!\begin{array}{c} 4 \\ 12 \end{array}$  & $\begin{array}{c} 2 \\ 2 \end{array}$ & $24a$\\
$162b$ &  9  & 3 & 18\\
$162c$  &  9  & 3 & 18\\
$176a$ &  4  & 2 & $88a$\\
$176b$ &  4  & 4 & $11a$\\
$176c$ &  4  & 4 & $44a$\\
$192a$ &  8  & 2 & $96a$\\
$192b$ &  8  & 2 & $96a$\\
$192c$ &  8  & 4 & $24a$\\
$192d$ &  8  & 4 & $24a$
\end{tabular}
\end{center}
\caption{Ramified cusps for conductors $\leq 200$}
\end{table}

Note also that being minimal by twist is far from being a necessary condition in order for the modular parametrization to be unramified at the cusps. For example, the isogeny class $45a$, which is a twist of $15a$, has a modular parametrization which is unramified at the cusps.

In all cases we computed, the following properties seem to hold :

\begin{enumerate}
\item If $e_\varphi(\frac{1}{d})$ is even then $v_2(d) \in \{2,3,4\}$ and $v_2(N) =2v_2(d)$;
\item If  $e_\varphi(\frac{1}{d})$ is divisible by $8$ then $v_2(d)=4$ and $v_2(N)=8$;
\item If  $e_\varphi(\frac{1}{d})$ is divisible by $3$ then $v_3(d)=2$ and $v_3(N)=4$.
\end{enumerate}

These observations are consistent with the following theorem of Atkin and Li \cite[Thm 4.4.i)]{atkin-li} : if $f \in S_2(\Gamma_0(N))$ is a newform and $v_p(N)$ is odd, then $f$ is $p$-minimal, in the sense that it has minimal level among its twists by characters of $p$-power conductor.

Looking at elliptic curves whose conductor is highly divisible by 2 or 3, we also found examples of higher ramification indices. These are given in Table 2 below. In this table, we also give examples of ramified cusps for elliptic curves of odd conductor. In all examples we computed, the ramification index seems to be a divisor of $24$. This may be related to the fact that the exponent of the conductor of an elliptic curve at $2$ (resp. $3$) is bounded by $8$ (resp. $5$). It would be interesting to prove this divisibility in general.

\begin{table}[h]
\begin{center}
\begin{tabular}{c|c|c|c}
\textrm{Isogeny class} & $N$ & $d$ & $e_\varphi(\frac1d)$\\
\hline
$405c$  & $3^4 \cdot 5$ & 9 & 3 \\
$768b$  & $2^8 \cdot 3$ & 16 & 8 \\
$891b$  & $3^4 \cdot 11$ & 9 & 3 \\
$1296c$  & $2^4 \cdot 3^4$ & 36 & 6 \\
$1296e$  & $2^4 \cdot 3^4$ & 36 & 12 \\
$20736c$  & $2^8 \cdot 3^4$ & 144  & 24 
\end{tabular}
\end{center}
\caption{Higher ramification indices}
\end{table}

\providecommand{\bysame}{\leavevmode ---\ }
\providecommand{\og}{``}
\providecommand{\fg}{''}
\providecommand{\smfandname}{\&}
\providecommand{\smfedsname}{\'eds.}
\providecommand{\smfedname}{\'ed.}
\providecommand{\smfmastersthesisname}{M\'emoire}
\providecommand{\smfphdthesisname}{Th\`ese}


\begin{thebibliography}{10}

\bibitem{atkin-li}
{\scshape A.~O.~L. Atkin {\normalfont \smfandname} W.~C.~W. Li} -- {\og Twists
  of newforms and pseudo-eigenvalues of {$W$}-operators\fg}, \emph{Invent.
  Math.} \textbf{48} (1978), no.~3, p.~221--243.

\bibitem{magma}
{\scshape W.~Bosma, J.~Cannon {\normalfont \smfandname} C.~Playoust} -- {\og
  The {M}agma algebra system. {I}. {T}he user language\fg}, \emph{J. Symbolic
  Comput.} \textbf{24} (1997), no.~3-4, p.~235--265, Computational algebra and
  number theory (London, 1993).

\bibitem{BCDT}
{\scshape C.~Breuil, B.~Conrad, F.~Diamond {\normalfont \smfandname} R.~Taylor}
  -- {\og On the modularity of elliptic curves over {$\mathbf Q$}: wild 3-adic
  exercises\fg}, \emph{J. Amer. Math. Soc.} \textbf{14} (2001), no.~4,
  p.~843--939.

\bibitem{breuil-mezard}
{\scshape C.~Breuil {\normalfont \smfandname} A.~M{\'e}zard} -- {\og
  Multiplicit\'es modulaires et repr\'esentations de {${\rm GL}_2({\bf Z}_p)$}
  et de {${\rm Gal}(\overline{\bf Q}_p/{\bf Q}_p)$} en {$l=p$}\fg}, \emph{Duke
  Math. J.} \textbf{115} (2002), no.~2, p.~205--310, With an appendix by Guy
  Henniart.

\bibitem{bushnell-henniart}
{\scshape C.~J. Bushnell {\normalfont \smfandname} G.~Henniart} -- \emph{The
  local {L}anglands conjecture for {$\rm GL(2)$}}, Grundlehren der
  Mathematischen Wissenschaften [Fundamental Principles of Mathematical
  Sciences], vol. 335, Springer-Verlag, Berlin, 2006.

\bibitem{delaunay:thesis}
{\scshape C.~Delaunay} -- {\og Formes modulaires et invariants de courbes
  elliptiques d\'efinies sur {$\mathbf{Q}$}\fg}, Th\`ese de doctorat,
  Universit\'e Bordeaux 1, d\'ecembre 2002.

\bibitem{delaunay}
{\scshape C.~Delaunay} -- {\og Critical and ramification points of the modular
  parametrization of an elliptic curve\fg}, \emph{J. Th\'eor. Nombres Bordeaux}
  \textbf{17} (2005), no.~1, p.~109--124.

\bibitem{gelbart}
{\scshape S.~S. Gelbart} -- \emph{Automorphic forms on ad\`ele groups},
  Princeton University Press, Princeton, N.J., 1975, Annals of Mathematics
  Studies, No. 83.

\bibitem{jacquet-langlands}
{\scshape H.~Jacquet {\normalfont \smfandname} R.~P. Langlands} --
  \emph{Automorphic forms on {${\rm GL}(2)$}}, Lecture Notes in Mathematics,
  Vol. 114, Springer-Verlag, Berlin, 1970.

\bibitem{loeffler-weinstein}
{\scshape D.~Loeffler {\normalfont \smfandname} J.~Weinstein} -- {\og On the
  computation of local components of a newform\fg}, \emph{Mathematics of
  Computation} \textbf{81} (2012), p.~1179--1200.

\bibitem{mazur-swinnerton-dyer:LpE}
{\scshape B.~Mazur {\normalfont \smfandname} P.~Swinnerton-Dyer} -- {\og
  Arithmetic of {W}eil curves\fg}, \emph{Invent. Math.} \textbf{25} (1974),
  p.~1--61.

\bibitem{merel:symbmanin}
{\scshape L.~Merel} -- {\og Symboles de {M}anin et valeurs de fonctions
  {$L$}\fg}, in \emph{Algebra, arithmetic, and geometry: in honor of {Y}u. {I}.
  {M}anin. {V}ol. {II}}, Progr. Math., vol. 270, Birkh\"auser Boston Inc.,
  Boston, MA, 2009, p.~283--309.

\bibitem{nelson-pitale-saha}
{\scshape P.~D. Nelson, A.~Pitale {\normalfont \smfandname} A.~Saha} -- {\og
  Bounds for {R}ankin-{S}elberg integrals and quantum unique ergodicity for
  powerful levels\fg}, \emph{J. Amer. Math. Soc.} \textbf{27} (2014), no.~1,
  p.~147--191.

\bibitem{saha:GL2}
{\scshape A.~{Saha}} -- {\og Large values of newforms on $\mathrm{GL}(2)$ with
  highly ramified central character\fg}, To appear in International Mathematics Research Notices, Preprint at \url{http://www.maths.bris.ac.uk/~as12313/research/sup-norm-whittaker.pdf}.

\bibitem{saha:unpublished}
\bysame , {\og Ramification at the cusps\fg}, Unpublished note, July 2012.

\bibitem{stevens:arithmetic}
{\scshape G.~Stevens} -- \emph{Arithmetic on modular curves}, Progress in
  Mathematics, vol.~20, Birkh\"auser Boston Inc., Boston, MA, 1982.

\bibitem{pari250}
{The PARI~Group} -- Bordeaux, \emph{{PARI/GP, version {\tt 2.5.0}}}, 2011,
  available from \url{http://pari.math.u-bordeaux.fr/}.

\end{thebibliography}
\end{document}